\newtheorem{theorem}{Theorem}
\newtheorem{lemma}[theorem]{Lemma}
\newtheorem{corollary}[theorem]{Corollary}
\newtheorem{proposition}[theorem]{Proposition}
\newtheorem{remark}[theorem]{Remark}
\title{Mixed Cages: monotony, connectivity and upper bounds 
\thanks{Research supported by CONACyT-M{\' e}xico under Project 282280 and PAPIIT-M{\' e}xico under Projects IN107218, IN106318.}}
\author{Gabriela Araujo-Pardo\footnotemark[2] \and Claudia de la Cruz\footnotemark[3] \and Diego Gonz{\' a}lez-Moreno \footnotemark[4]}
\begin{document}
\maketitle

\def\thefootnote{\fnsymbol{footnote}}
\footnotetext[2]{Instituto de Matem{\' a}ticas, Universidad Nacional Aut{\'o}noma de M{\' e}xico, Campus Juriquilla, Quer{\' e}taro.{\tt garaujo@math.unam.mx}.}
\footnotetext[3]{Universidad Aut{\' o}noma Metropolitana.{\tt clau.mar@ciencias.unam.mx}.}
\footnotetext[4]{Departamento de Matem{\' a}ticas Aplicadas y Sistemas, UAM-Cuajimalpa, Mexico City, Mexico. {\tt
dgonzalez@correo.cua.uam.mx}.}

\begin{abstract} 
A \emph{$[z, r; g]$-mixed cage} is a mixed graph $z$-regular by arcs, $r$-regular by edges, with girth $g$ and minimum order. %In this paper we study structural properties of mixed cages: 
Let $n[z,r;g]$ denote the order of a $[z,r;g]$-mixed cage. 

In this paper we prove that $n[z,r;g]$ is a monotonicity function, with respect of $g$, for $z\in \{1,2\}$, and we use it to prove that the underlying graph of a $[z,r;g]$-mixed cage is 2-connected, for $z\in \{1,2\}$. 
We also prove that  $[z,r;g]$-mixed cages are strong connected.
We present bounds of $n[z,r;g]$ and 
constructions of $[z,r;5]$-mixed graphs and show a $[10,3;5]$-mixed cage of order $50$.

{\it{Keywords:}} Mixed cages, monotonicity, connectivity, projective planes, cages and directed cages. 
\end{abstract}

\section{Introduction}

In this paper we consider graphs which are finite and mixed, 
that is, they may contain (directed) arcs as well as (undirected) edges. We don't allow multiple edges and arcs.

The mixed regular graphs were introduced in \cite{AHM19}. A {\it{mixed regular graph}} is a simple and finite graph $G$, such that for every $v\in V(G)$,  $v$ is the head of $z$ arcs, the tail of $z$ arcs and is incident with $r$ edges. The directed degree of a vertex $v$ is equal to $z$, while the undirected degree is equal to $r$. We set $d=z+r$ to be the \emph{degree} of $v$. 
We will consider walks of the form $(v_0, \dots, v_n)$, where eihter $v_i v_{i+1}$ is an edge of $G$ or $(v_i, v_{i+1})$ is an arc of $G$, for $i \in \{0, \dots, n-1\}$.   In other words, the walks could contain edges and arcs, provided that all the arcs are traversed in the same direction.
The \emph{girth} of $G$ is the length of the shortest cycle of $G$, we denote the length of a cycle $C$ as $\ell(C)$.
The \emph{distance} between two vertices $u$ and $v$ denoted by $d(u,v)$ is defined as the shortest length of all $uv$-paths.
If $G$ has girth equal to $g$, then  $G$ is a $[z,r;g]$-{\em{mixed graph}} of directed degree $z$, undirected degree $r$ and girth $g$. A $[z, r; g]$-{\em mixed cage} is a $[z, r; g]$-mixed graph of minimum order. 
Through this paper we use $n[z,k;g]$  to denote the order of a 
$[z, r; g]$-mixed graph.

The {\it{Cage Problem}} is to find  the smallest  number, $n(k,g)$, of vertices for a $k$-regular graph of girth $g$. It has been widely studied since cages were introduced by Tutte \cite{T47} in 1947 and after Erd\"os and Sachs  \cite{ES63} proved, in 1963, their existence. A complete survey about this topic and its relevance can be found in \cite{EJ08}. Moreover, there exists a lot of results, related with this problem that studied structural properties of cages as monotonicity and connectivity. For instance, Fu, Huang and Rodger \cite{Fu} proved that if $k\ge 2$ and  $3\leq g_1 < g_2$, then $n(k,g_1)<n(k,g_2)$. Concerning the connectivity, it is known that if $G$ is a $(k,g)$-cage, then $\lambda(G)=k$ \cite{LMC05,WXW03}, and for  every odd girth $g\ge 7$, $\kappa(G)\ge \lfloor k/2\rfloor +1$  \cite{conexidad}.  For even girth $g\ge 6$, $(k,g)$-cages with $k\ge 3$ are $(t+1)$-connected, $t$ being the largest integer such that $t^2+2t^2\le k$ \cite{LMB05}.

In this paper we are interested in the {\em Mixed Cage Problem}, that is, find constructions of $[z,r;g]$-mixed regular graphs, with specified degrees $z$, $r$, girth $g$ and minimum order.  
The work concerning constructions of $[z,r;5]$-mixed cages starts in \cite{AHM19} and continue in \cite{AB19} where the authors give constructions of small $[z,r;5]$-mixed cages with similar techniques than used in \cite{AABB17} to construct small regular graphs of girth five. 

The paper is organized as follows: in Section 2 we study structural properties of mixed cages. We prove that  $n[z,r;g]$ is a monotonicity function with respect of $g$, for $z\in \{1,2\}$. As a consequence of this result we show  that the underlying graph of a $[z,r;g]$-mixed cages is 2-connected, for $z\in\{1,2\}$. We also show that every $[z,r;g]$-mixed cage is strong connected.

In Section 3, we present a  lower bound for $n[z,r;g]$, specifically we show that if $g\geq 5$, then $n[z,r;g]\geq n_0(r,g)+2z$, where  $n_0(r,g)$ is the More bound.  We give two different constructions that provide us new upper bounds for $n[z,k;g]$. In the first construction we use the incident finite graph of a partial plane defined over any finite field generated by a prime. 
This construction was also used in previous papers, for example, to construct regular graphs of girth 5 \cite{Articulo_5} and to construct Mixed Moore graphs of diameter 2 \cite{ABMM16}. 
With this construction we state that $n[z,r;5]\leq 10zr$. 
In the second construction, we establish  that $n[z,r;g]\leq gn_0(r,g)$. In particular, we construct a $(10,3;5)$-mixed graph of order 50, that results on a $(10,3;5)$-mixed cage.

\section{Monotonicity and connectivity}

This section is divided into two parts. In the first one we focus on the study of the  monotonicity.  In the second part we study the connectivity of mixed cages.

\subsection{Monotonicity}
Let $G$ be a mixed graph. Let $E^*(G)=A(G)\cup E(G)$ and, if $v$ is a vertex of $G$, let $N^*(v)=N(v)\cup N^+(v)\cup N^-(v)$

\begin{lemma} \label{alternantes}
Let $z\in \{1,2\}$. Every $[z,r;g]$-mixed cage contains a cycle of length $g$ with either two consecutive arcs or two consecutive edges.
\end{lemma}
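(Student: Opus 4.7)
My plan is to argue by contradiction: assume $G$ is a $[z,r;g]$-mixed cage in which every cycle of length $g$ strictly alternates arcs and edges, and derive a contradiction. Strict alternation forces $g$ to be even, so write $g=2m$, and fix such an alternating $g$-cycle $C=v_0 v_1 \dots v_{2m-1}v_0$, with the arcs $(v_{2i},v_{2i+1})$ at the ``even'' positions and the edges $\{v_{2i+1},v_{2i+2}\}$ at the ``odd'' positions. A preliminary, very useful observation is that no arc or edge of $G$ joining two vertices of $C$ can be a chord: any chord produces two strictly shorter closed walks whose lengths add to $g+2$, so at least one has length $<g$, contradicting the girth (and the no-multi-arc/edge hypothesis rules out parallel joins). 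Thus all arcs and edges at a vertex $v_i\in C$ not appearing in $C$ itself go to vertices \emph{outside} $C$.

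For $z=1$, I would exploit that the arcs of $G$ form a disjoint union of directed cycles $D_1,\dots,D_t$, since every vertex has in-degree and out-degree exactly one in the arc subgraph. Each $D_j$ is itself a cycle in $G$, hence has length $\geq g$; if some $D_j$ has length exactly $g$ then $D_j$ is a $g$-cycle consisting entirely of arcs and we are done. So assume $|D_j|>g$ for all $j$. Now I trace the arc-cycle $D$ containing the arc $(v_0,v_1)$ of $C$: at $v_1$, the unique outgoing arc is not in $C$ (the step after $v_1$ in $C$ is an edge), so by the chord observation it exits $C$ to some vertex $w_1\notin V(C)$. Following $D$ forward from $w_1$, the walk must eventually return to some vertex of $C$ (in fact to $v_0$), and I want to combine the resulting arc-path from $v_1$ with a piece of $C$ to produce a closed walk shorter than $g$, contradicting the girth.

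For $z=2$ each vertex has two choices of out-arc, but the same chord observation still applies, and the shortest directed cycle in the arc subgraph still has length $\geq g$; if it equals $g$ we are done. Otherwise I would consider both out-arcs at $v_1$: one of them is $(v_0,v_1)$'s successor along some directed cycle in the arc-subgraph, but the second out-arc of $v_1$ (which is not in $C$ and leaves $C$ by the chord observation) can be followed in a similar manner to manufacture a shorter closed walk together with a portion of $C$. The main obstacle is exactly this arc-tracing step: one has to make sure that when the traced arc-path returns to $C$, the concatenation with a subpath of $C$ does produce a cycle (rather than a walk that repeats a vertex) and that the length of this cycle is strictly less than $g$. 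This is where the restriction $z\in\{1,2\}$ is essential, because with $z=1$ the traced path is fully determined (and so one can control exactly where it returns), and with $z=2$ there are still only two outgoing arcs to analyze at each step, keeping the case analysis finite.
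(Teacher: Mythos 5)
Your proposal has a genuine gap at exactly the point you flag yourself: the ``arc-tracing'' step. In the $z=1$ case the arc subgraph is indeed a disjoint union of directed cycles, and the one containing $(v_0,v_1)$ does return to $v_0$ --- but by your own assumption its length exceeds $g$, so the arc-path from $v_1$ back to $v_0$ has length at least $g$, and closing it up with the arc $(v_0,v_1)$ (the only direction-respecting way to travel from $v_0$ to $v_1$ inside $C$) just reproduces that long directed cycle. There is no mechanism that forces the traced path to return to $C$ quickly, so no closed walk of length less than $g$ ever materializes, and the girth condition alone yields no contradiction. The deeper problem is that your argument never uses the minimality of the cage: if it worked it would prove the statement for \emph{every} $[z,r;g]$-mixed graph, and nothing in the girth hypothesis rules out a (non-minimal) mixed graph whose only $g$-cycles alternate. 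The restriction to cages is essential, and the paper's proof is built around it: assuming all $g$-cycles alternate, one finds an arc $\overrightarrow{uy}$ leaving the cycle $C$ with $y\notin V(C)$ and with $N(y)\cup N^+(y)$ disjoint from $C$, then deletes $y$ (and a second vertex when $r$ is odd) and re-wires its neighbours by a matching of new edges and arcs so as to restore $z$- and $r$-regularity; a distance argument shows the new edges cannot create a cycle shorter than $g$, while $C$ survives intact, so the result is a $[z,r;g]$-mixed graph on fewer vertices, contradicting minimality. The role of $z\in\{1,2\}$ is also different from what you suggest: it is not about keeping a case analysis of out-arcs finite, but about being able to re-pair the one or two in- and out-neighbours of the deleted vertex by new arcs without creating parallel arcs or short cycles.

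Two smaller points: your chord observation needs care for arc chords, since an arc can be traversed in only one direction, so only one of the two closed walks through a chord is a legitimate cycle (though the conclusion that chords are excluded does survive, partly because an arc parallel to an edge of $C$ would give a $2$-cycle). And the parity remark that strict alternation forces $g$ even is correct but is not needed in the paper's argument, which only uses one consecutive arc--edge pair on $C$.
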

\begin{proof}
Let $G$ be a $[z,r;g]$-mixed cage with $z\in\{1,2\}$.
Suppose that every cycle of $G$ of length $g$ is alternating by arcs and edges. 
Let $C$ be a cycle of length $g$.
Let $\overrightarrow{xu}\in A(C)$ and  $uv\in E(C)$. Since $C$ is an induced cycle, there exists an arc $\overrightarrow{uy}$ with $y\in V(G)\setminus V(C)$.
Let $N(y)=\{v_1,\ldots,v_r\}$ and $u'\in N^+(y)$.
If $w\in N(y)\cup N^+(y)$, then $w\not\in V(C)$. Otherwise, $G$ would contains a cycle of length $g$ with either two consecutive arcs or with two consecutive edges,  a contradiction.

We divide the proof into two cases. In each case we construct a $[z,r;g]$-mixed graph with less vertices than $G$, giving a contradiction. 

\emph{Case 1)} Suppose $z=1$.
 If  $r$ is even,
let $E'=\{v_{2i-1}v_{2i}:1 \leq i \leq r/2 \}  \cup \{\overrightarrow{uu'}\}$.
We define $G'$ as $G'=G-y+E'$ (see Figure \ref{mono_31}). 
Observe that $G'$ is a mixed graph 1-regular in arcs and $r$-regular in edges. Moreover, the cycle $C$ is totally contained in $G'$. Hence, $G'$ is a $[1,r;g']$-mixed graph with $g'=g(G') \le g$.
Let $C'$ be a cycle of $ G' $ such that $\ell(C')=g'$. 

\begin{figure}[h]
\centering
\includegraphics[width=1\linewidth]{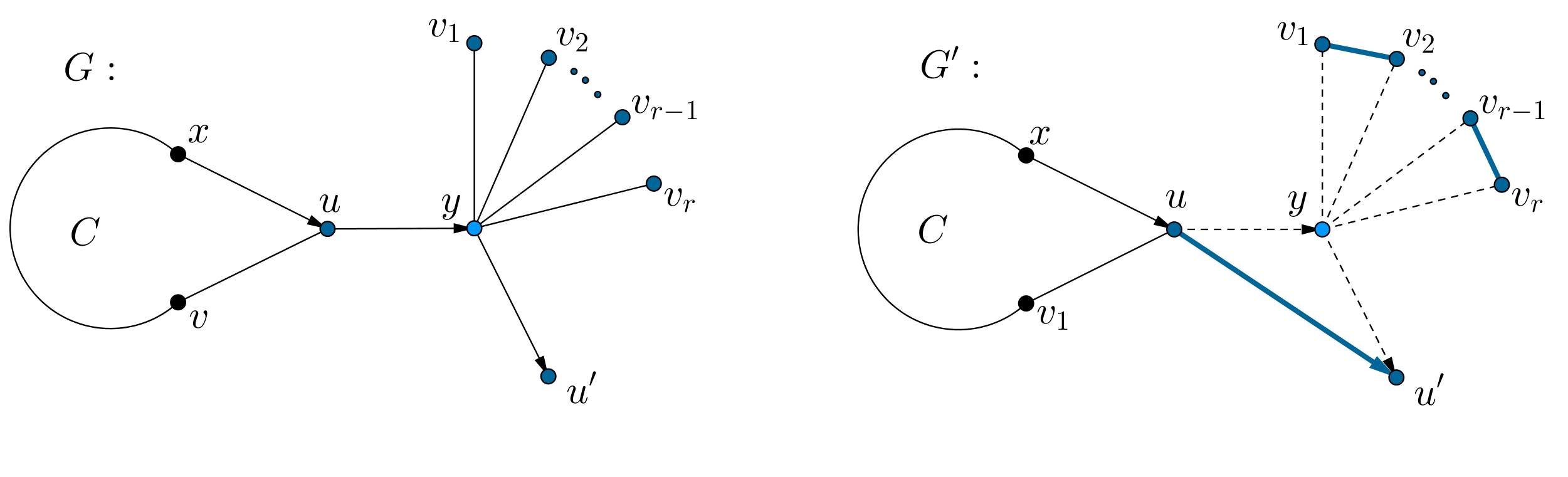}
\caption{Construction of $G'$ from a $[1,r;g]$-mixed cage with $r$ even.}
\label{mono_31}
\end{figure}

If $|E^*(C')\cap E'|=0$, then $C'$ is totally contained in $G$. Thus, $g\le \ell(C)=g'$, a contradiction.
Suppose that $|E^*(C')\cap E'|=1$. Let $\{\alpha_i\alpha_j\}=E^*(C')\cap E'$. Observe that $C'-\alpha_i\alpha_j$ is totally contained in $G$. Hence, either  $\ell(C')\geq d_{G-y}(\alpha_i,\alpha_j)+1\geq g$  or $\ell(C')\geq d_{G-y}(\alpha_j,\alpha_i)+1\geq g$, giving a contradiction.
Continue assuming that $|E(C')\cap E'|\geq 2$. Since $E'$ is an independent set of arcs and edges, there exist $e_1,e_2\in E^*(C)\cap E'$ such that $C'$ contains an $\alpha_i\alpha_j$-path totally contained in $G-y$. Therefore, $\ell(C') \geq d_{G-y}(\alpha_i,\alpha_j)+2 > g$, a contradiction.

If  $r$ is odd, let $w=v_r$,  $N(w)=\{v'_1,\ldots,v'_r\}$, where $v'_r=y$, $N^-(w)=\{x'\}$ and $N^+(w)=\{y'\}$. Let $E'=\{v_{2i-1}v_{2i},v'_{2i-1}v'_{2i}:1 \leq i \leq (r-1)/{2}\}\cup\{\overrightarrow{uu'},\overrightarrow{x'y'}\}$. 
Define the mixed graph $G'$ as $G'=G-\{y,w\}+E'$.
By a  similar analysis to the previous case, we conclude that $g(G)=g$.
Therefore,  $G'$ is a $[1,r;g]$-mixed graph with two vertices less than $G$, yielding a contradiction.

\emph{Case 2)} Suppose $z=2$.
Let  $N^-(y)=\{u,s\}$ and $N^+(y)=\{u',s'\}$.
If $r$ is even,
let $E'=\{v_1v_2,v_3v_4,\dots,$ $v_{r-1}v_r\}$. 
Since $d^-(u')=d^-(s')=2$, it follows that $|N^{-}(u')\cap \{u,s\}|\le 1$ and $|N^{-}(s')\cap \{u,s\}|\le 1$. 
Next, we define a set  $A'$ depending on the  the sets $N^-(u')$ and $N^-(s')$. If  $u\in N^-(u')$ or $s\in N^-(s')$, then $A'=\{\overrightarrow{us'},\overrightarrow{su'}\}$. In other case, $A'=\{\overrightarrow{uu'},\overrightarrow{ss'}\}$.
Define $G'$ as $G'=G-y+E'+A'$ (see Figure \ref{mono2_3}). By a similar analysis to that of \emph{Case 1)}, $G'$ is a $[2,r;g]$-mixed graph with less order than $G$, a contradiction.

\begin{figure}[h]
\centering
\includegraphics[width=0.7\linewidth]{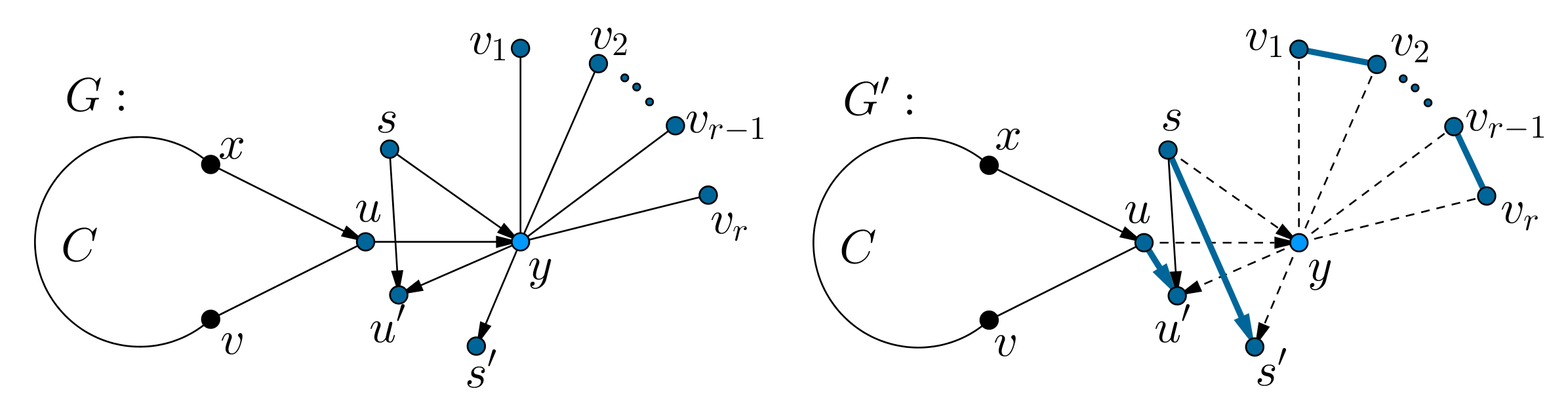}
\caption{Construction of $G'$ from a $[2,r;g]$-mixed cage with $r$ even and $\protect\overrightarrow{su'}\in A(G)$.}
\label{mono2_3}
\end{figure}

If $r$ is odd, let $w=v_r$, $N(w)=\{v'_1,\ldots,v'_r\}$, where $v'_r=y$, $N^-(w)=\{x',x''\}$ and $N^+(w)=\{y',y''\}$.
Let $E'=\{v_{2i-1}v_{2i},v'_{2i-1}v'_{2i}:1 \leq i \leq (r-1)/{2}\}$. 
Since $d^-(u')=d^-(s')=2$, it follows that $|N^{-}(u')\cap \{u,s\}|\le 1$ and $|N^{-}(s')\cap \{u,s\}|\le 1$. Define a set $A_y$ depending on the sets $N^-(u')$ and $N^-(s')$.
If $u\in N^-(u')$ or $s\in N^-(s')$, then $A_y=\{\overrightarrow{su'},\overrightarrow{us'}\}$. In other case,  $A_y=\{\overrightarrow{uu'},\overrightarrow{ss'}\}$. 
Analogously, we define  $A_w$ depending on the sets $N^-(y')$ and $N^-(y'')$. If  $x'\in N^-(y')$ or $x''\in N^-(y'')$, then $A_w=\{\overrightarrow{x'y''},\overrightarrow{x''y'}\}$. In other case, $A_w=\{\overrightarrow{x'y'},\overrightarrow{x''y''}\}$.

Define $G'$ as $G'=G-\{y,w\}+E'+A_y+A_w$. Again, by a similar analysis to that of \emph{Case 1)}, a contradiction is obtained.

Therefore, every $[z,r;g]$-mixed cage have at less one cycle of length $g$ with two consecutive arcs or two  consecutive edges.
\end{proof}

In \cite{AHM19}, Araujo-Pardo, Hern{\' a}ndez-Cruz and Montellano-Ballesteros, calculated a general lower bound for a $[z,r;g]$-mixed cage in the following theorem:

\begin{theorem}\label{TeoCotaInf}
If $n[1,r;g]$ is the order of a $[1,r;g]$-mixed cage, then

$n[1,r;g]\geq n_0[1,r;g] = \left\lbrace
\begin{array}{ll}
2\left(1 + \sum_{i=1}^{(g-3)/2} n_0(r,2i+1)\right) + n_0(r,g) & \textup{if $g$ is odd;}\\
2\left(1 + \sum_{i=1}^{(g-2)/2} n_0(r,2i+1)\right) & \textup{if $g$ is even.}
\end{array}
\right.$
\end{theorem}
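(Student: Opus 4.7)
\emph{Plan.} I would adapt the classical Moore-bound argument to the mixed setting by using the unique in-arc and out-arc at every vertex (which the hypothesis $z=1$ forces) to produce an \emph{arc-spine}. Fix any vertex $v_0$ of $G$ and set $d = \lfloor (g-1)/2 \rfloor$. Recursively define $v_i$ for $|i| \leq d$ by letting $v_i$ be the unique out-neighbor of $v_{i-1}$ for $i > 0$ and the unique in-neighbor of $v_{i+1}$ for $i < 0$. These $2d+1$ spine vertices are pairwise distinct: any coincidence $v_i = v_j$ with $i < j$ would close the spine into a directed mixed cycle of length $j - i \leq 2d < g$, contradicting the girth.

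\emph{Tree attachment and disjointness.} At each spine vertex $v_i$ attach the undirected BFS ball $T_i$ of radius $d - |i|$, consisting of all vertices reachable from $v_i$ using only undirected edges of $G$ in at most $d - |i|$ steps. Because the undirected subgraph of $G$ is $r$-regular with girth at least $g$, the standard tree-expansion argument yields $|T_i| \geq n_0(r,\, 2(d - |i|) + 1)$. The core claim is that the $T_i$ are pairwise disjoint (and contain no spine vertex other than the root). Suppose, for contradiction, that $x \in T_i \cap T_j$ with $i < j$; concatenating a shortest undirected $v_j$--$x$ path in $T_j$, a shortest undirected $x$--$v_i$ path in $T_i$, and the arc-path $v_i \to v_{i+1} \to \cdots \to v_j$ produces a closed mixed walk of length at most $(j - i) + (d - |i|) + (d - |j|) \leq 2d < g$. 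Since the spine arcs are traversed only forward and the undirected portion uses only edges, no edge or arc is repeated along the walk, and extracting a simple closed subwalk yields a mixed cycle of length strictly less than $g$, contradicting the girth.

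\emph{Summation and even case.} For odd $g = 2d + 1$, summing the tree size bounds over $|i| \leq d$ gives
\[
n[1,r;g] \ \geq\ \sum_{i=-d}^{d} |T_i| \ \geq\ n_0(r, g) + 2 \sum_{k=0}^{d-1} n_0(r, 2k+1) \ =\ 2\!\left(1 + \sum_{i=1}^{(g-3)/2} n_0(r, 2i+1)\right) + n_0(r, g),
\]
which is the claimed bound. For even $g = 2d$ I would anchor the construction at an arc $\overrightarrow{u_0 v_0}$ rather than at a single vertex: grow the backward spine $u_0, u_{-1}, \dots, u_{-(d-1)}$ by following in-arcs from $u_0$, the forward spine $v_0, v_1, \dots, v_{d-1}$ by following out-arcs from $v_0$, attach undirected BFS trees of depths $d - 1, d - 2, \dots, 0$ at the respective spine vertices, and re-run the disjointness argument to obtain $n[1,r;g] \geq 2 \sum_{k=0}^{d-1} n_0(r, 2k+1) = 2\bigl(1 + \sum_{i=1}^{(g-2)/2} n_0(r, 2i+1)\bigr)$.

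\emph{Main obstacle.} The technical heart is the disjointness step. The length inequality above shows the closed walk is shorter than $g$, but one must still verify that after removing any backtrack-like repetitions a genuine mixed cycle remains. This relies crucially on the directedness of the spine arcs, which prevents them from cancelling against undirected edges, and on showing that no spine vertex $v_k$ with $k \neq i$ lies inside $T_i$ (otherwise the arc-subpath $v_i \to \cdots \to v_k$ together with an undirected $v_k$--$v_i$ path in $T_i$ already closes into a forbidden short cycle). Once this bookkeeping is in place the remaining arithmetic is a direct expansion of the two sums.
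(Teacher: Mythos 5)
The paper does not actually prove this statement: Theorem~\ref{TeoCotaInf} is quoted from \cite{AHM19} and used as a black box, so there is no in-paper proof to compare yours against. Judged on its own, your proposal is the natural Moore-type argument for the mixed setting and is essentially correct: since $z=1$ forces a unique in-arc and out-arc at each vertex, the directed spine is well defined, its vertices are distinct by the girth condition, the undirected balls have the claimed Moore sizes because the underlying undirected graph is $r$-regular of girth at least $g$, and the two sums (using $n_0(r,1)=1$ for the depth-zero trees at the spine ends) reproduce the stated formula exactly in both parity cases; the arc-anchored variant for even $g$ is the right modification. The one genuinely delicate step is the one you flag yourself: the closed walk of length at most $g-1$ produced by an overlap $x\in T_i\cap T_j$ is not automatically a cycle, and your parenthetical claim that no edge or arc is repeated along it is not true for arbitrary choices, since the two undirected tree paths through $x$ may share edges or internal vertices. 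The standard repair is to choose, among all overlapping configurations, a triple $(i,j,x)$ minimizing $(j-i)$ plus the sum of the undirected distances from $v_i$ and $v_j$ to $x$; minimality then forces the two shortest undirected paths to meet only at $x$ and excludes every spine vertex other than $v_i,v_j$ from their union, so the concatenation with the arc-path $v_i\to\cdots\to v_j$ is a genuine mixed cycle of length less than $g$ containing at least one arc --- a contradiction. This single minimality argument also absorbs the subcase $v_k\in T_i$ that you treat separately, after which the counting is complete.
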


Now, we can prove the main theorem of this section.

\begin{theorem} \label{monotonia1}
Let $z\in \{1,2\}$, $r \geq 1$ and $3\leq g_1 < g_2$ be integers, then $$
n[z,r;g_1]<n[z,r;g_2].
$$
\end{theorem}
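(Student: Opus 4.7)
The plan is to reduce to the single-step strict inequality $n[z,r;g]<n[z,r;g+1]$, chaining these for $g=g_1,g_1+1,\dots,g_2-1$. So fix $g\geq 3$, let $H$ be a $[z,r;g+1]$-mixed cage, and the goal is to build a $[z,r;g]$-mixed graph $H'$ with $|V(H')|<|V(H)|$.

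By Lemma~\ref{alternantes}, $H$ contains a cycle $C$ of length $g+1$ with two consecutive edges $uv,vw$ or two consecutive arcs $\overrightarrow{uv},\overrightarrow{vw}$. Delete the middle vertex $v$ (and, when $r$ is odd, a second vertex chosen as in the proof of Lemma~\ref{alternantes}) and repair the degree deficits by the same pairing-of-$N(v)$-and-reinsertion-of-arcs construction used there, with the one essential tweak that $u$ is paired with $w$: insert the edge $uw$ in the edge case, or the arc $\overrightarrow{uw}$ in the arc case. This insertion is admissible because $g(H)=g+1\geq 4$ forbids $uw\in E(H)$ in the edge case and forbids both $\overrightarrow{uw},\overrightarrow{wu}\in A(H)$ in the arc case --- any of those would create a cycle of length at most $g$ in $H$ (the triangle $u\to v\to w\to u$ when $\overrightarrow{wu}$ is present, and the concatenation of the new $uw$-edge/arc with the $(g-1)$-piece of $C$ from $w$ back to $u$ in the other two subcases).

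The new graph $H'$ is $z$-regular by arcs and $r$-regular by edges, with one or two fewer vertices than $H$. Its girth is exactly $g$: the cycle $C$, with its piece $uvw$ replaced by the single new edge/arc from $u$ to $w$, is a length-$g$ cycle of $H'$, so $g(H')\leq g$; and the three-case distance argument of Lemma~\ref{alternantes} yields $g(H')\geq g$, the key input being that $d_{H-v}(\alpha,\beta)\geq g-1$ for every pair of neighbors $\alpha,\beta$ of $v$ in $H$ (else closing through $v$ would yield a cycle of length at most $g$ in $H$, contradicting $g(H)=g+1$). Hence $n[z,r;g]\leq|V(H')|<|V(H)|=n[z,r;g+1]$.

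The main obstacle is the arc case with $z=2$: the two arcs to be inserted between $v$'s former in-neighbors $\{u,s\}$ and out-neighbors $\{w,s'\}$ must avoid creating multi-arcs, which may force the Lemma~\ref{alternantes} swap $\{\overrightarrow{uw},\overrightarrow{ss'}\}\mapsto\{\overrightarrow{us'},\overrightarrow{sw}\}$ whenever $\overrightarrow{ss'}$ already lies in $A(H)$. This swap discards the arc $\overrightarrow{uw}$ on which the length-$g$ cycle above relies, so the proof must either argue that $\overrightarrow{ss'}\notin A(H)$ (perhaps by re-choosing $C$ or $v$) or exhibit a length-$g$ cycle of $H'$ by a different route. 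The odd-$r$ variant adds a second deleted vertex with its own auxiliary arcs subject to the same multi-arc constraints, handled in parallel with Lemma~\ref{alternantes}.
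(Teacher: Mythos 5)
Your overall strategy is the paper's: reduce to the single step $n[z,r;g]<n[z,r;g+1]$, take a $(g+1)$-cycle $C$ with two consecutive edges or two consecutive arcs (Lemma~\ref{alternantes}), delete the middle vertex, re-pair its neighbourhood so that its two $C$-neighbours get joined directly, and recover $g(G')\ge g$ from the fact that $d_{G-u}(\alpha,\beta)\ge g-1$ for all $\alpha,\beta\in N^*(u)$. All of that matches Cases 1.1, 1.2 and the first part of Case 2 in the paper, and your admissibility check for the new edge/arc between the two $C$-neighbours is correct.

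The gap is exactly the one you flag in your final paragraph, and flagging it is not the same as closing it. For $z=2$ with two consecutive arcs $\overrightarrow{x_1u},\overrightarrow{uy_1}$ on $C$, the second in-/out-neighbour pair $(x_2,y_2)$ of $u$ may satisfy $\overrightarrow{x_2y_2}\in A(G)$, which forces the crossed repair $\{\overrightarrow{x_1y_2},\overrightarrow{x_2y_1}\}$ and destroys the shortened cycle $C-u+\overrightarrow{x_1y_1}$ on which your bound $g(G')\le g$ rests. You leave open how to proceed, suggesting one might re-choose $C$ or $v$; the paper does something different. It splits on whether $N^+(y_2)\cap V(C)$ is empty. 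If some $y'\in N^+(y_2)\cap V(C)$ exists, the crossed arcs still yield the cycle $C-u+\overrightarrow{x_1y_2}+\overrightarrow{y_2y'}$ of length at most $g+1$, and minimality of the cage is then used a second time: were $g(G')=g+1$, $G'$ would be a $[2,r;g+1]$-mixed graph with fewer vertices than $G$, a contradiction, so in fact $g(G')\le g$. If $N^+(y_2)\cap V(C)=\emptyset$, the surgery is moved off the cycle entirely: one deletes $y_2$ (not $u$), so that $C$ survives intact in $G'$, and again invokes minimality to force the girth below $g+1$ before running the distance argument. Without one of these ideas (or a proof that the bad configuration cannot occur, which you do not give), your argument does not go through in this case, so the proof is incomplete. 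The odd-$r$ bookkeeping you defer to Lemma~\ref{alternantes} is indeed routine and parallels the paper.
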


\begin{proof}
It suffices to show that if $z\in \{1,2\}$, $r \geq 1$ and $g \geq 3$, then $n[z,r;g]<n[z,r;g+1]$.
Let $G$ be a $[z,r;g+1]$-mixed cage.
Let $C$ be a cycle of $G$ such that $\ell(C)=g+1$.
By Lemma \ref{alternantes}, $C$ contains two consecutive arcs or two consecutive edges.
Let $u\in V(C)$. Suppose that  $N(u)=\{v_1,\ldots,v_r\}$, $x_1 \in N^-(u)$ and $y_1\in N^+(u)$.

We divide the proof in cases depending on the value of  $z$ and the parity of $r$.
However, the general reasoning for all cases is the same: from the graph $G$, by deleting a set of vertices and adding a set of arcs and a set of edges, a $[z,r;g']$-mixed graph $G'$ with girth $g'<g+1$ and $|V(G')|< n[z,r;g]$ is constructed. 

\emph{Case 1)} Suppose $z=1$.
If $g=3$, by Theorem \ref{TeoCotaInf},  $n[1,r;3]=2+n_0(r,3)=r+3$, $n[1,r;4]\geq 2(1+n_0(r,3))=2r+4$, and the result follows. Continue assuming $g\ge 4$.

\emph{Case 1.1)} Suppose that $r$ is even.
If $C$ has two consecutive edges $v_1u$ and $uv_2$ (see Figure \ref{mono_1}), let $E'=\{v_{2i-1}v_{2i}:1 \leq i \leq r/2 \}\cup \{\overrightarrow{x_1y_1}\}$.
\begin{figure}[h]
\centering
\includegraphics[width=1\linewidth]{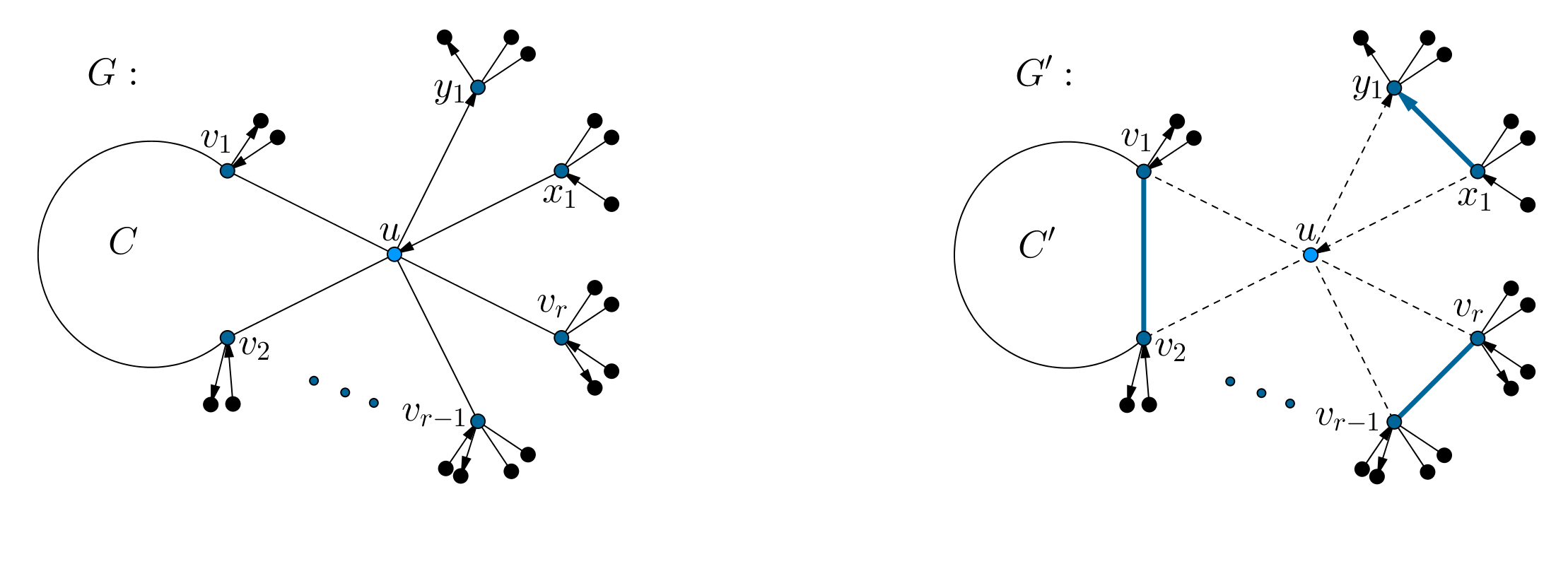}
\caption{Operation in a $[1,r;g+1]$-mixed cage with $r$ even, in a cycle with two edges consecutive.}
\label{mono_1}
\end{figure}

Let  $G'=G-u+E'$. Observe that $g(G')\leq g$, since $G'$ contains the cycle $C-u+v_1v_2$. We claim that $g(G')= g$. Let $C'$ be a cycle of $G'$ such that $\ell(C')=g(G')\le g$. If $E(C')\cap E'=\emptyset$, then $E^*(C')\subseteq E(G)$, implying  that $\ell(C')\geq g+1$, a contradiction. Hence, $E^*(C')\cap E'\neq\emptyset$.
If $|E^*(C')\cap E'|=1$, then $E^*(C')\cap E'=\{\alpha_i\alpha_j\}$, which implies that $C'- \alpha_i\alpha_j$ is an $\alpha_i\alpha_j$-path or an $\alpha_j\alpha_i$-path totally contained in $G$ of length at least $g-1$. Therefore, $g\le d_{G-u}(\alpha_i,\alpha_j)+1 \le \ell(C')=g(G') \leq g$.
Suppose that  $|E^* (C')\cap E'|\geq 2$. Since $E'$ is an independent set of edges and arcs, there exist $e_1,e_2\in E^*(C')\cap E'$ such that  there is an $\alpha_i\alpha_j$-path in $C'$ totally contained in $G-u$, where $\alpha_i$ is a vertex of $e_1$ and $\alpha_j$ is a vertex of $e_2$. Since $\alpha_i,\alpha_j \in N^*(u)$, the length of  every $\alpha_i\alpha_j$-path in $G-u$ is at least $g-1$. Hence, 
$g<d_{G-u}(\alpha_i,\alpha_j)+2 \leq \ell(C')\le g$, a contradiction.

Therefore, $g(G')=g$ and $G'$ is a $[1,r;g]$-mixed graph. Thus,
$$
n[1,r;g]\le |V(G')|=|V(G)|-1<n[1,r;g+1],
$$
and the result follows.

The case in which $C$ contains two consecutive arcs is analogous. 

\emph{Case 1.2)} Suppose that $r$ is odd. 
If $v_1u,uv_2\in E(C)$
(see Figure \ref{mono_3}),
let $w=v_r$, $N(w)=\{v'_1,\ldots,v'_r\}$, where $v'_r=u$, $N^-(w)=\{x'\}$ and $N^+(w)=\{y'\}$. Let $E'=\{v_{2i-1}v_{2i},v'_{2i-1}v'_{2i}:1 \leq i \leq (r-1)/2\}\cup\{\overrightarrow{x_1y_1},\overrightarrow{x'y'}\}$. 

Let $G'=G-\{u,w\}+E'$. Notice that $G'$ contains a cycle of length $g$, therefore  $g(G')\le g$. Let $C'$ be a cycle of $G'$ such that $\ell(C')= g(G')$. 
By a similar analysis to the \emph{Case 1.1)}, it follows that $\ell(C')\geq d_{G-\{u,w\}}(\alpha_i,\alpha_j)+|E^*(C')\cap E'|\geq g$, where $\alpha_i$ and $\alpha_j$ are vertices of the edges or arcs in $E^*(C')\cap E'$.

Therefore, $G'$ is a $[1,r;g]$-mixed graph with $n[1,r;g+1]-2$ vertices. Thus 
$$
n[1,r;g]\le |V(G')|=|V(G)|-1<n[1,r;g+1].
$$
The case in which $C$ contains two consecutive arcs is analogous.

\begin{figure}[h]
\centering
\includegraphics[width=0.7\linewidth]{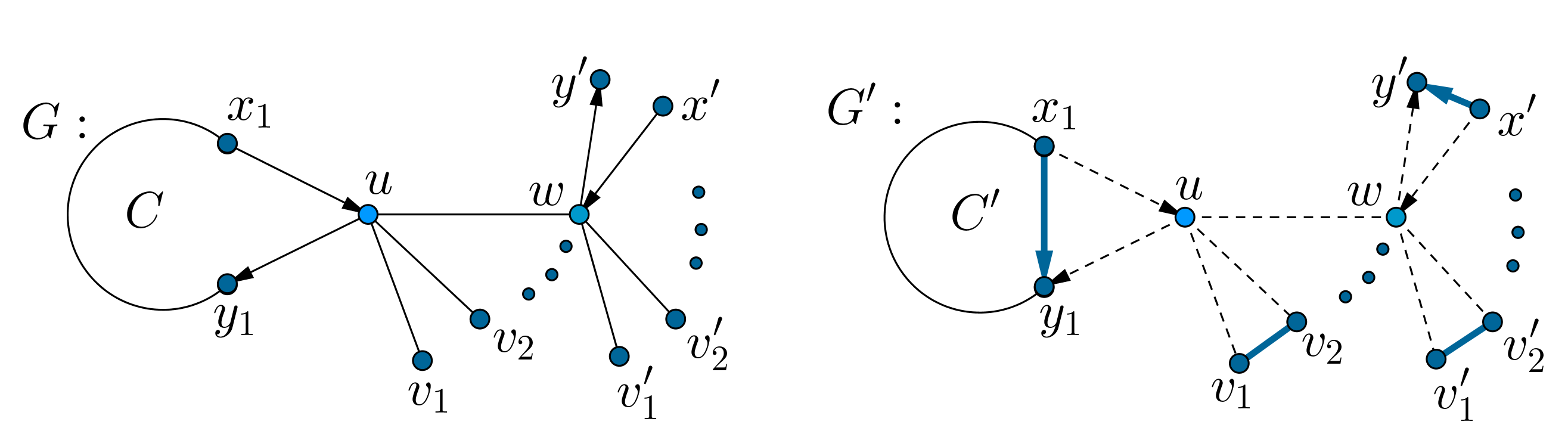}
\caption{Operation in a $[1,r;g+1]$-mixed cage with $r$ odd, in a cycle with two arcs consecutive.}
\label{mono_3}
\end{figure}

\emph{Case 2)} Suppose $z=2$.
Let $N^-(u)=\{x_1, x_2\}$ and $N^+(u)=\{y_1, y_2\}$.

\emph{Case 2.1)} Suppose that $r$ is even.
If $v_1u,uv_2\in E(C)$,  let $E'=\{v_{2i-1}v_{2i}:1\leq i \leq r/2\}$. 
Next, we define a set $A'$ depending on the sets $N^-(y_1)$ and $N^-(y_2)$.
If $x_1\in N^-(y_1)$ or $x_2\in N^-(y_2)$, then $A'=\{\overrightarrow{x_1y_2},\overrightarrow{x_2y_1}\}$. In other case,   $A'=\{\overrightarrow{x_1y_1},\overrightarrow{x_2y_2}\}$.
Let $G'=G-u+E'+A'$ (see Figure \ref{mono2_1}). 
Proceeding as in  \emph{Case 1.1)}, it follows that  $g(G')=g$ and the result follows.

The case in which $\overrightarrow{x_1u},\overrightarrow{uy_1}\in A(C)$ and $y_2 \not\in N^+(x_2)$  is proved in a similar way.

\begin{figure}[h]
\centering
\includegraphics[width=1\linewidth]{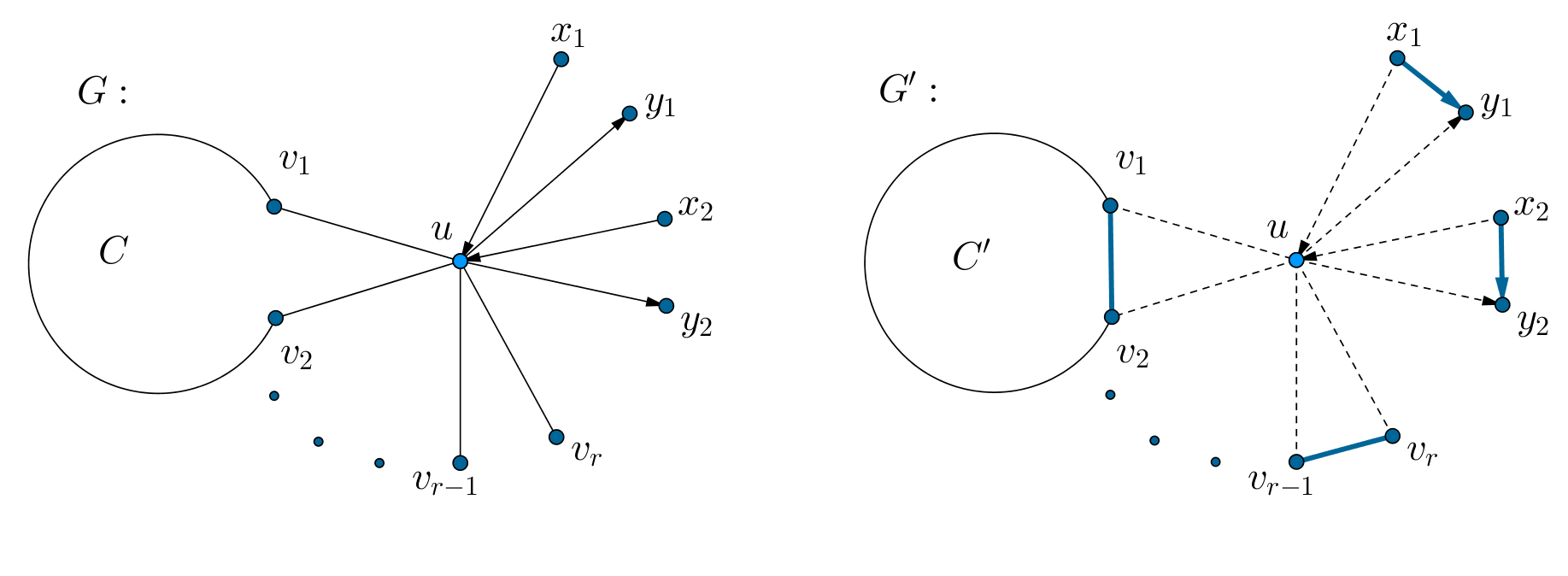}
\caption{Operation in a $[2,r;g+1]$-mixed cage with $r$ even, in a cycle with two edges consecutive and there is no the arc $\protect\overrightarrow{x_1y_1}$ or $\protect\overrightarrow{x_2y_2}$.}
\label{mono2_1}
\end{figure}
Suppose now that $\protect\overrightarrow{x_1u},\protect\overrightarrow{uy_1}\in A(C)$ and $y_2 \in N^+(x_2)$.
If $y'\in N^+(y_2)\cap V(C)$, let $E'=\{v_{2i-1}v_{2i}:1\leq i \leq r/2\}\cup \{\protect\overrightarrow{x_1y_2},\protect\overrightarrow{x_2y_1}\}$.
Let $G'=G-u+E'$ (see Figure \ref{mono2_2}). We claim that $g(G')= g$. Since $G'$ contains the cycle  $C-u+\protect\overrightarrow{x_1y_2}+\protect\overrightarrow{y_2y'}$,  it follows that $g(G')\leq g+1$. If $g(G')= g+1$, then $G'$ is a $[2,r;g+1]$-mixed graph with $n[2,r;g+1]-1$ vertices, a contradiction. Therefore $g(G')\le g$ and by a similar analysis to that in \emph{Case 1.1)}, it follows that $n[2,r;g]\leq|V(G')|<|V(G)|$.

\begin{figure}[h]
\centering
\includegraphics[width=1\linewidth]{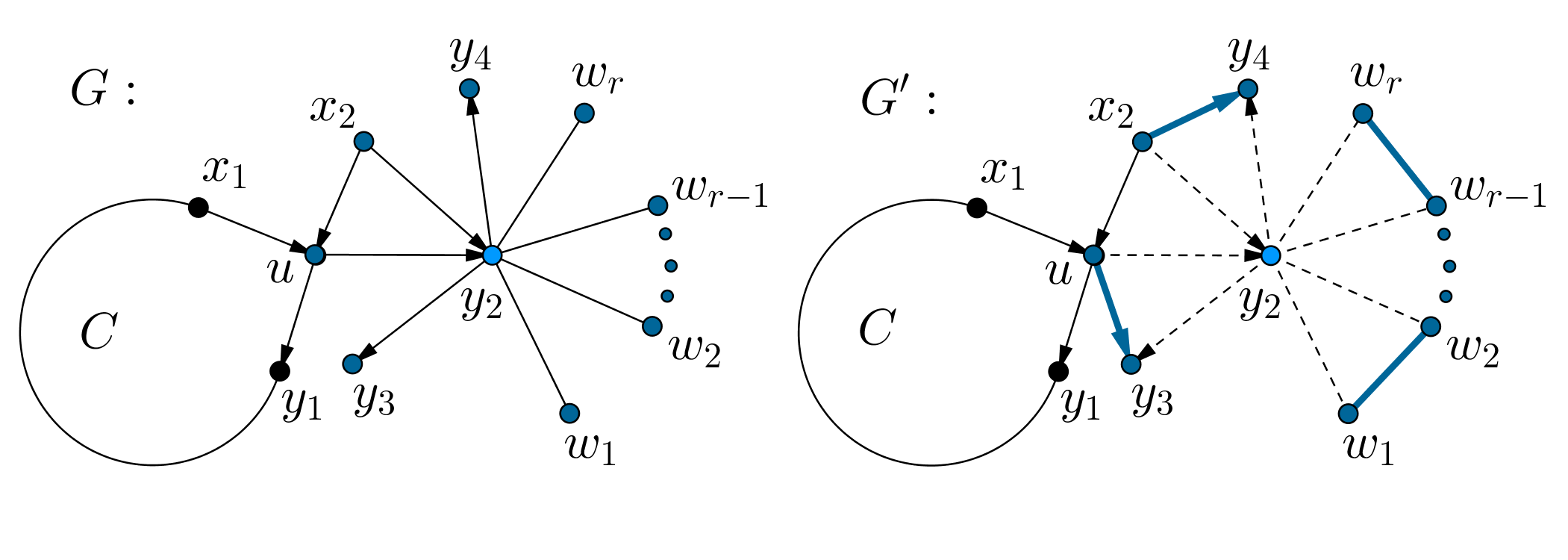}
\caption{Operation in a $[2,r;g+1]$-mixed cage with $r$ even, in a cycle with two arcs consecutive and there is no the arc $\protect\overrightarrow{x_2y_2}$.}
\label{mono2_2}
\end{figure}

If $N^+(y_2)\cap V(C)=\emptyset$, let $N^+(y_2)=\{y_3,y_4\}$, $N(y_2)=\{w_1,\ldots,w_r\}$. Set $E'=\{w_1w_2,w_3w_4,\ldots,w_{r-1}w_r\}\cup \{\protect\overrightarrow{uy_3},\overrightarrow{x_2y_4}\}$. Let $G'=G-y_2+E'$, note that $G'$ is a $[2,r,g']$-mixed graph, with one vertex less than $G$ and $g'=g(G')<g+1$, since the cycle $C$ is contained in $G'$.
Let $C'$ be a cycle such that $\ell(C')=g'$, similarly to  \emph{Case 1.1)},  we conclude that $\ell(C')\geq g$. Thus, $g'=g$ and $G'$ is a $[2,r;g]$-mixed graph with $n[2,r;g+1]-1$ vertices and $n[2,r;g]\le |V(G')|<n[2,r;g+1]$.

\emph{Case 2.2)} Suppose that $r$ is odd.
Let $s=v_r$, $N(s)=\{v'_1,\ldots,v'_r\}$, where $v'_r=u$, $N^-(s)=\{x'_1,x'_2\}$ and $N^+(s)=\{y'_1,y'_2\}$.

 Suppose that $v_1u,uv_2\in E(C)$.
Let $E'=\{v_{2i-1}v_{2i},v'_{2i-1}v'_{2i}:1 \leq i \leq (r-1)/2\}\cup A_u\cup A_s$, where $A_u$ and $A_s$ are defined  depending on the sets  $N^-(y_i)$ and $N^-(y'_i)$ for $i \in \{1,2\}$. Since $d^-(y_i)=2$, it follows that  $|N^-(y_i)\cap \{x_1,x_2\}|\le 1$. 
If either $x_1\in N^-(y_1)$ or $x_2\in N^-(y_2)$, then  $A_u=\{\overrightarrow{x_1y_2},\overrightarrow{x_2y_1}\}$. 
In other case set  $A_u=\{\overrightarrow{x_1y_1},\overrightarrow{x_2y_2}\}$. 
If either $x'_1 \in N^-(y'_1)$ or $x'_2 \in N^-(y'_2)$, then $A_s=\{\overrightarrow{x'_1y'_2},\overrightarrow{x'_2y'_1}\}$. 
In other case  $A_s=\{\overrightarrow{x'_1y'_1},\overrightarrow{x'_1y'_1}\}$.
Let $G'=G-\{u,s\}+E'$. Proceeding as in  \emph{Case 1.1)}, it follows  that $g(G')= g$, and the result follows.

The case in which $\overrightarrow{x_1u},\overrightarrow{uy_1}\in A(C)$ and $y_2 \not\in N^+(x_2)$ is proved in a similar way.

Next, suppose that $\overrightarrow{x_1u},\overrightarrow{uy_1}\in A(C)$, and $y_2\in N^+(x_2)$. 
If $y'\in N^+(y_2)\cap V(C)$, let $E'=\{v_{2i-1}v_{2i},v'_{2i-1}v'_{2i}:  1 \leq i \leq (r-1)/{2}\} \cup \{\overrightarrow{x_1y_2},\overrightarrow{x_2y_1}\}\cup A_s$. If either $x'_1 \in N^-(y'_1)$ or $x'_2 \in N^-(y'_2)$, then  $A_s=\{\overrightarrow{x'_1y'_2},\overrightarrow{x'_2y'_1}\}$. In other case,  $A_s=\{\overrightarrow{x'_1y'_1},\overrightarrow{x'_2y'_2}\}$.
Let $G'=G-\{u,s\}+E'$ (see Figure \ref{mono2_4}). Since $G'$ contains the cycle $C-u+\overrightarrow{x_1y_2}+\overrightarrow{y_2y'}$, therefore $g(G')\leq g+1$. If $g(G')= g+1$, it follows that $G'$ is a $[2,r;g+1]$-mixed graph, a contradiction. Hence, $g(G')\le g$ and proceeding as in \emph{Case 1.1)}, it follows that $n[2,r;g]\leq|V(G')|<|V(G)|$.

\begin{figure}[h]
\centering
\includegraphics[width=0.8\linewidth]{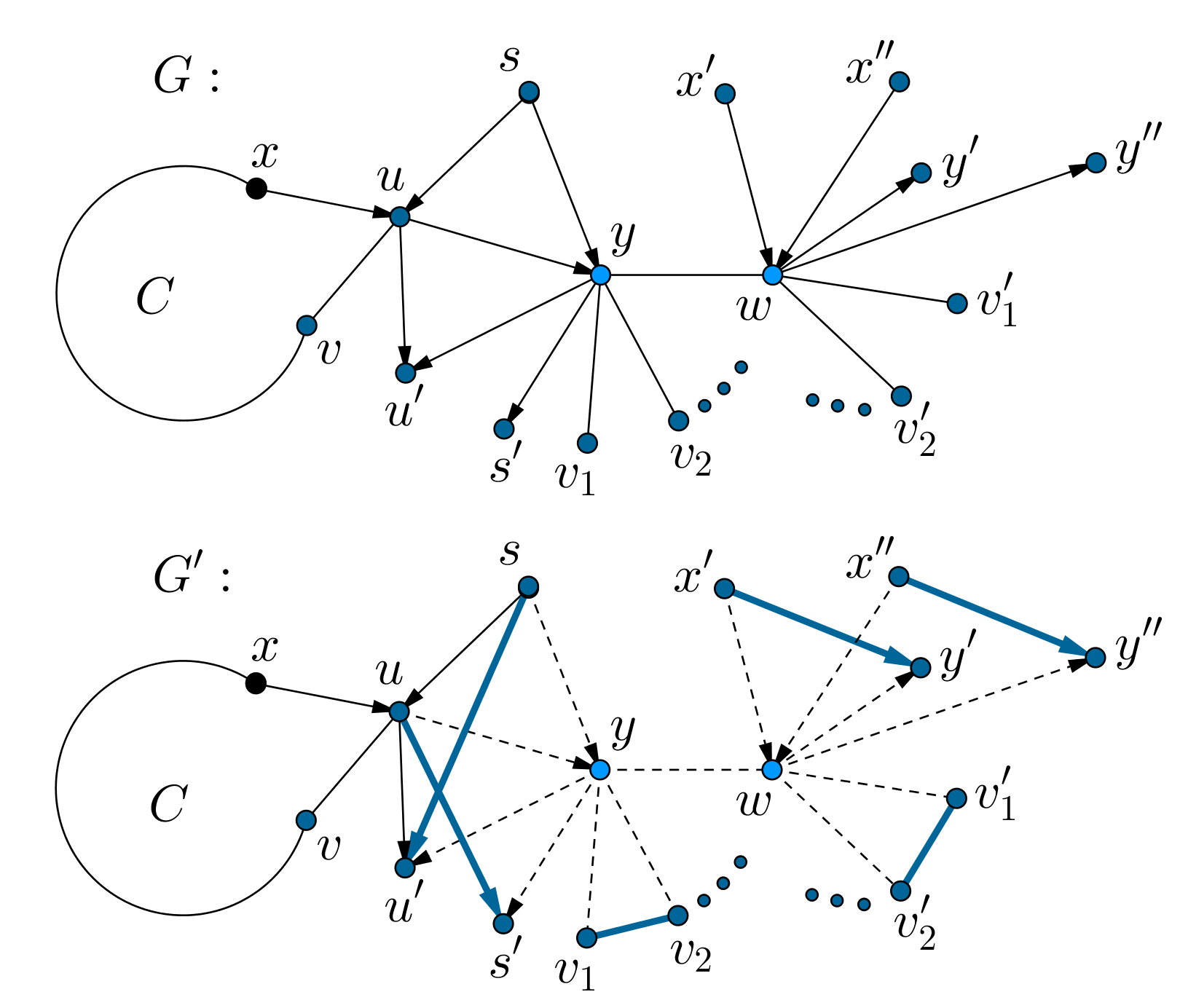}
\caption{Operation in a $[2,r;g+1]$-mixed cage with $r$ odd, in a cycle with two consecutive arcs  and there is the arc $\protect\overrightarrow{x_2y_2}$.}
\label{mono2_4}
\end{figure}

If $N^+(y_2)\cap V(C)=\emptyset$, let $N^+(y_2)=\{y_3,y_4\}$, $N(y_2)=\{w_1,\ldots,w_r\}$, with $w_r=t$. Let $N(t)=\{w'_1,\ldots,w'_r\}$, with $w'_r=y_2$, $N^-(t)=\{x'_1,x'_2\}$ and $N^+(t)=\{y'_1,y'_2\}$. 
Let $E'=\{w_{2i-1}w_{2i},w'_{2i-1}w'_{2i}: 1 \leq i \leq (r-1)/{2}\}\cup\{\overrightarrow{uy_3},\overrightarrow{x_2y_4}\}\cup A_t$, where  $A_t=\{\overrightarrow{x'_1y'_2},\overrightarrow{x'_2y'_1}\}$ if
either $x'_1 \in N^-(y'_1)$ or $x'_2 \in N^-(y'_2)$, and $A_t=\{\overrightarrow{x'_1y'_1},\overrightarrow{x'_2y'_2}\}$ 
in any  other case.

Let $G'=G-\{y_2,t\}+E'$. 
Since $C$ is contained in $G'$, it follows that $g(G')<g+1$. Let $C'$ a cycle such that $\ell(C')=g(G')$, proceeding as in \emph{Case 1.1)}, it can be concluded that $g(G')=g$. Hence, $G'$ is a $[2,r;g]$-mixed graph with $n[2,r;g+1]-2$ vertices. Therefore $n[2,r;g]\le |V(G')|<n[2,r;g+1]$, and the theorem is proved.

\end{proof}

\subsection{Connectivity of a mixed cage}

In this subsection we give some results on the connectivity  of a mixed cage.
A mixed graph $G$ is \emph{strong} if for every two vertices $u$ and $v$ of $G$ there exists a $uv$-path and a $vu$-path.
Clearly, if $G$ is a  $[z,r;g]$-mixed cage, then the underlying graph of $G$ is connected. 

\begin{theorem}
If $G$ is a $[z,r;g]$-mixed cage, then $G$ is strongly connected. 
\end{theorem}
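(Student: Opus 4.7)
The plan is, for each fixed $u \in V(G)$, to consider the set $S^+(u) \subseteq V(G)$ of vertices reachable from $u$ by a mixed walk (arcs traversed forward, edges in either direction) and to show $S^+(u) = V(G)$. Applying this for every $u$ gives the theorem: for any $u,v$, both $v \in S^+(u)$ and $u \in S^+(v)$, so $G$ is strongly connected.

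First I would note that $S^+(u)$ is closed under two operations: if $w \in S^+(u)$ and $wx \in E(G)$, then $x \in S^+(u)$ (since edges of $G$ may be traversed in either direction inside a walk); and if $w \in S^+(u)$ and $\overrightarrow{wx} \in A(G)$, then $x \in S^+(u)$. In particular, no edge of $G$ joins $S^+(u)$ with $V(G)\setminus S^+(u)$, and no arc of $G$ has its tail in $S^+(u)$ and its head outside. The heart of the argument is then a flow-balance count on arcs inside $G[S^+(u)]$: since every out-arc at a vertex of $S^+(u)$ stays inside $S^+(u)$, the number of arcs of $G$ with both ends in $S^+(u)$ equals $\sum_{v \in S^+(u)} d^+(v) = |S^+(u)|\,z$. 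On the other hand,
\[
|S^+(u)|\,z \;=\; \sum_{v \in S^+(u)} d^-(v) \;=\; (\text{arcs in } G[S^+(u)]) \;+\; (\text{arcs from } V(G)\setminus S^+(u) \text{ into } S^+(u)),
\]
so the second summand on the right must also vanish.

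Combining these observations, neither an edge nor an arc of $G$ joins $S^+(u)$ with $V(G)\setminus S^+(u)$; that is, $S^+(u)$ is a union of components of the underlying graph of $G$. Since the underlying graph of a cage is connected and $u \in S^+(u)$, this forces $S^+(u) = V(G)$, as desired. I do not anticipate a genuine obstacle; the only step requiring care is the indegree/outdegree bookkeeping in the displayed equation, together with the (standard but crucial) observation that the mixed-walk convention of the paper lets an edge be traversed in either direction, which is exactly what yields the edge-closure of $S^+(u)$.
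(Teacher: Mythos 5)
Your proof is correct and rests on the same mechanism as the paper's: a cut of $V(G)$ crossed by no edge, across which the arc count forced by $d^+(v)=d^-(v)=z$ shows that no arc crosses either, contradicting connectivity of the underlying graph. The only difference is packaging --- you work with the reachability set $S^+(u)$ where the paper partitions $V(G)$ into a strong component $H_1$ and the union $H^*$ of the remaining ones --- and your version is in fact slightly cleaner, since it avoids the paper's unjustified side claim that all arcs between $H_1$ and $H^*$ point in the same direction.
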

\begin{proof}
Let $G$ be   a $[z,r;g]$-mixed cage. Suppose to the contrary that $G$ is not strong. Let $H_1,\dots ,H_k$ be the strong components of $G$. Note that there are no edges between the strong components of $G$. Let $H^*=\cup_{i=2}^k H_i$. Since  the underlying graph of $G$ is connected, there is at least one arc between $H_1$ and $H^*$. Furthermore, all the arcs between $H_1$ and $H^*$ have the same direction. Suppose without lose of generality that $[V(H_1),V(H^*)]\neq \emptyset$. 
The number of arcs of $G$ is 
$$
|A(G)|=|V(G)|z=(|V(H_1)|+|V(H^*)|)z.
$$ 
On the other hand, for every vertex $v\in V(H_1)$ it follows that $d^-(v)=z$, and for every vertex $u\in V(H^*)$, $d^+(u)=z$. Hence, $|A(H_1)|=|V(H_1)|z$ and $|A(H^*)|=|V(H^*)|z$. Therefore, 
$|A(G)|=|A(H_1)|+|A(H^*)|+|[V(H_1),V(H^*)]|=|V(H)|z+|V(H^*)|z+|[V(H_1),V(H^*)]|$,  
implying that $|[V(H_1),V(H^*)]|=0$,  a contradiction.
\end{proof}

\begin{theorem}
The underlying graph of a $[z,r;g]$-mixed cage is 2-connected, for $z\in \{1,2\}$.
\end{theorem}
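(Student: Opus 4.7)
The plan is to argue by contradiction: assume the underlying graph of a $[z,r;g]$-mixed cage $G$ has a cut vertex $v$, and construct a $[z,r;g]$-mixed graph of strictly smaller order, contradicting the minimality of $G$. Let $C_1,\dots,C_m$ (with $m\ge 2$) be the connected components of $G-v$ in the underlying graph, and let $N^-(v)$, $N^+(v)$, and $N(v)$ be the in-arc, out-arc, and edge neighborhoods of $v$. Each neighbor of $v$ sits in a unique $C_i$. Using the strong connectivity of $G$ proved in the previous theorem, any directed $C_i$-to-$C_j$ walk must traverse $v$, which imposes a distribution constraint on the neighbors of $v$ among the components; in particular, $v$ must have edge-neighbors in every component whenever its arc-neighbors concentrate on a proper subset of the components.

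The main construction is the same kind of surgery developed in Lemma \ref{alternantes} and Theorem \ref{monotonia1}: delete $v$ and reinstate the $[z,r]$-regularity by pairing the orphaned out-arcs at $N^-(v)$ with the orphaned in-arcs at $N^+(v)$ through new arcs, and pairing the orphaned edges at $N(v)$ through new edges, with an auxiliary vertex deletion when $r$ is odd (as in Case 1.2 and Case 2.2 of Lemma \ref{alternantes}). I would choose the pairings to cross between different components whenever possible: a new arc or edge whose endpoints lie in different $C_i$'s creates no new cycle in the resulting graph $G'$, since the endpoints are disconnected in $G-v$. Any forced within-component pair $u,w\in N^\ast(v)\cap C_i$ could create a new cycle of length $1+d_{G-v}(u,w)\ge 1+(g-2)=g-1$, using the distance lower bound that comes from considering the length-$2$ walk $u\,v\,w$ in $G$ together with the girth. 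To push this bound up to $g$, I would exploit the freedom granted by $m\ge 2$ components (and by the strong-connectivity-forced distribution of $N^\pm(v)$ and $N(v)$) to avoid matching the extremal distance-$(g-2)$ pair to itself, or to perform a slightly modified surgery (for instance, also deleting an additional edge-neighbor of $v$ in the heavier component) that absorbs the potential $(g-1)$-cycle.

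Once girth at least $g$ is established, $G'$ is a $[z,r;g]$-mixed graph on $|V(G)|-1$ (or $|V(G)|-2$ when $r$ is odd) vertices, contradicting the minimality of $G$ and showing that no cut vertex exists. The hardest step will be the girth verification: ensuring that the chosen pairings produce no new cycles of length $g-1$. This is handled by case analysis on $z\in\{1,2\}$ and the parity of $r$, closely paralleling the case analysis of Lemma \ref{alternantes} and Theorem \ref{monotonia1}. The restriction $z\in\{1,2\}$ keeps the arc-pairing combinatorics at $v$ small enough for a complete and explicit verification, and it is essential for controlling the possible configurations of $N^-(v)$ and $N^+(v)$ across the components.
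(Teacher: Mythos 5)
Your overall strategy (contradict minimality by building a smaller $[z,r;g]$-mixed graph) is the right one, but the surgery you propose --- delete the cut vertex $v$ and re-pair its orphaned neighbours inside $G-v$ --- has a gap that you name yourself and do not close: a within-component pairing of two neighbours $u,w$ of $v$ only yields a new cycle of length at least $1+d_{G-v}(u,w)\ge 1+(g-2)=g-1$, one short of what is needed. This is not a removable technicality. First, within-component pairs are genuinely forced in general: if, say, $r$ is even and one component contains $r-1$ of the $r$ edge-neighbours of $v$, then after matching the lone outside neighbour you must still pair $r-2$ edge-neighbours inside that component, so ``pair across components whenever possible'' does not eliminate the bad case. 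Second, there need be no non-extremal pair to fall back on: in a graph close to the Moore bound every two neighbours of $v$ are at distance exactly $g-2$ in $G-v$, so every internal pairing is extremal. Your fallback of ``a slightly modified surgery (also deleting an additional edge-neighbour)'' is exactly where the whole difficulty lives, and it is left unspecified; as written the proof does not go through.

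The paper sidesteps this with a different construction. Instead of operating inside $G-v$, it takes a component $H$ of $G-v$ of minimum order (so $|V(H)|<|V(G)|/2$), forms the disjoint union of $H$ with its reverse $\overleftarrow{H}$, and joins each neighbour $u$ of $v$ lying in $H$ to its mirror copy $u'$ by an edge when $u\in N(v)$ and by a suitably oriented arc when $u\in N^-(v)$ or $u\in N^+(v)$. Regularity is restored automatically, and --- this is the point --- every new edge or arc crosses between the two disjoint copies, so any cycle of $G^*$ that is not already a cycle of $H$ must use at least two of them and therefore contains two paths between distinct neighbours of $v$, each of length at least $g-2$; hence $g(G^*)\ge 2(g-2)\ge g$ (for $g\ge 4$). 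Since $|V(G^*)|=2|V(H)|<|V(G)|$, Theorem \ref{monotonia1} gives the contradiction. To salvage your approach you would have to either prove that an all-cross-component pairing always exists (false in general, by the counting above) or adopt this doubling idea, which replaces the delicate length-$(g-1)$ analysis by a clean doubling of the $(g-2)$ distance bound.
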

\begin{proof}
Let $G$ be a $[z,r;g]$-mixed cage, $z\in \{1,2\}$. Suppose that there exists a vertex $v\in V(G)$ such that the underlying graph of $G-v$ is not connected. Let $H$ be a connected component of $G-v$ of minimum order. Observe that $|V(H)|<|V(G)|/2$. 
Let $\overleftarrow{H}$ be the reverse graph of $H$ and let $u'$ denote the corresponding vertex of $u$ in $\overleftarrow{H}$. We construct a new graph $G^*$ formed by the disjoint union of $H$ and $\overleftarrow{H}$, an edge set $E'=\{uu':u\in V(H)\cap N(v), u'\in V(\overleftarrow{H})\}$  and an arc set $A'=\{\overrightarrow{uu'}: u \in V(H)\cap N^-(v), u'\in V(\overleftarrow{H})\}\cup  \{\overrightarrow{u'u}: u \in V(H)\cap N^+(v), u'\in V(\overleftarrow{H})\}$.
Observe that $G^*$ is a $[z,r,g(G^*)]$-mixed graph. Let $C$ be a cycle of  length  $g(G^*)$. 
Since $|V(G^*)|=2|V(H)|<|V(G)|$, by Theorem \ref{monotonia1}, $g(G^*)<g$. Hence $(E(C)\cup A(C))\cap (E'\cup A')\neq \emptyset$. Thus, there exists at least two vertices $u_1$ and $u_2$ of $H$ which are the endings of those edges or arcs belonging to $C$. Notice that  $u_1$ and $u_2$ are at distance at least $g-2$ in $G-v$. Hence, $g(G^*)\ge 2(g-2)>g$. Therefore, $G^*$ is a $[z,r;g^*]$-mixed cage with $g^*\geq g$. By Theorem \ref{monotonia1}, 
$n[z,r;g']\leq |V(G^*)| < |V(G)| = n[z,r;g]$, a contradiction.
\end{proof}

\section{Construction of mixed graphs}

In this section some constructions of families of mixed graphs are presented. 

\subsection{Lower bounds}

In this subsection we give a lower bound for $n[z,r;g]$. Let $G$ be a mixed graph. Given a vertex $u$ of $G$, we define the \emph{projection} of $u$ as $\overrightarrow{N}(u)=N^+(u)\cup N(u)$. Similarly, the {\em injection} of $u$ is the set $\overleftarrow{N}(u)=N^-(u)\cup N(u)$.

\begin{proposition}\label{lowerbound1}
The order of a $[z,r;g]$-mixed cage is at least $n_0(r,g)+2z$.
\end{proposition}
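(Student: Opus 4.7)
The plan is to fix a vertex $v\in V(G)$ and exhibit two disjoint subsets of $V(G)$ whose sizes sum to at least $n_0(r,g)+2z$: a ``Moore ball'' around $v$ in the edge-subgraph, together with the $2z$ arc-neighbors of $v$. Both pieces rely crucially on the girth condition, but for different reasons.

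First, I would consider the spanning subgraph $H$ of $G$ consisting only of its edges. Since every vertex of $G$ is incident to exactly $r$ edges, $H$ is $r$-regular; and since every cycle of $H$ is a cycle of $G$, the girth of $H$ is at least $g$. Let $X$ be the set of vertices at edge-distance at most $\lfloor g/2\rfloor$ from $v$ in $H$. A standard level-by-level BFS argument, applied to $H$ centered at $v$, gives $|X|\ge n_0(r,g)$: for odd $g=2k+1$ the ball has exactly $1+r+r(r-1)+\cdots+r(r-1)^{k-1}$ vertices, since no two vertices at levels $\le k$ can share a neighbor at the next level without producing a cycle of length $<g$; for even $g=2k$ the levels are ``perfect'' up to $k-1$, and using independence of $S_{k-1}$ together with the fact that each of its vertices has a unique parent in $S_{k-2}$ forces $|S_k|\ge (r-1)^{k-1}$, which sums to the same total.

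Next, let $Y=N^+(v)\cup N^-(v)$. Because $g\ge 3$ excludes $2$-cycles in $G$, we have $N^+(v)\cap N^-(v)=\emptyset$ and hence $|Y|=2z$. The crucial step is to verify $X\cap Y=\emptyset$: if some $y\in N^+(v)$ were to lie in $X$, then a shortest $vy$-edge-path $P$ in $H$, of length $\ell\le\lfloor g/2\rfloor$, could be reversed and prepended with the arc $\overrightarrow{vy}$ to form a cycle of $G$ of length $\ell+1\le\lfloor g/2\rfloor+1<g$, contradicting the girth of $G$. The case $y\in N^-(v)$ is symmetric, traversing $P$ forward from $v$ to $y$ and closing with $\overrightarrow{yv}$. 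Putting the two pieces together, $|V(G)|\ge |X|+|Y|\ge n_0(r,g)+2z$.

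The only step that is not purely formal, and which I regard as the main point to pin down, is the Moore lower bound $|X|\ge n_0(r,g)$ for even $g$, since the classical Moore bound is most often stated via BFS from an edge rather than from a vertex; the vertex-centered BFS described above nonetheless attains the same total, and it is this localized count that is needed, because it is the arc-neighbors of the particular vertex $v$ that one wants to exclude.
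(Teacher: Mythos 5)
Your proof is correct and follows essentially the same route as the paper's: pass to the $r$-regular edge-subgraph, invoke the Moore bound there, and then add the $2z$ arc-neighbors of a vertex. The only difference is that you carefully verify the two counted sets are disjoint (via the short mixed cycle that would otherwise arise) and that the vertex-centered ball already attains $n_0(r,g)$ for even $g$, details the paper's one-line argument leaves implicit.
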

\begin{proof}
Let $G$ be a $[z,r;g]$-mixed cage. By deleting the arcs of $G$ we obtain an $(r,g')$-graph with $g'\ge g$. Hence by the Moore bound and the monotonocity it follows that $|V(G)|\ge n_0(r,g)$. In addition, since every vertex of $G$ has $z$ ex-neighbors and $z$ in-neighbors, it follows that  $|V(G)\ge n_0(r,g)+2z$.

\end{proof}
Next we improve the previous lower bound for some specific parameters.

\begin{theorem}\label{cota_cuello_5}
The order of a $[10,3;5]$-mixed cage is at least 50.
\end{theorem}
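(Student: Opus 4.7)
The plan is to sharpen the $30$-vertex lower bound of Proposition \ref{lowerbound1} by extracting $20$ more vertices through an analysis of second-level neighbourhoods in a $[10,3;5]$-mixed cage $G$.

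Fix $v \in V(G)$. The proof of Proposition \ref{lowerbound1} already shows that
\[ V_v := \{v\} \cup N(v) \cup E_2(v) \cup N^+(v) \cup N^-(v) \]
contains $30$ distinct vertices, where $E_2(v)$ is the set of vertices at edge-distance exactly $2$ from $v$; indeed $|E_2(v)|=6$ by the Moore-tree structure of the $3$-regular edge subgraph (which has girth at least $5$), and the five sets are pairwise disjoint by the standard short-cycle exclusions. The goal is to exhibit at least $20$ further vertices in $V(G)\setminus V_v$.

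The key observation is that, for each $u \in N^+(v)$, a case-by-case inspection of the possible $2$-, $3$-, and $4$-cycles in $G$ yields the sharp containments $N^+(u) \cap V_v \subseteq N^+(v)$ and $N(u) \cap V_v \subseteq N^+(v)$, with the symmetric statements $N^-(u) \cap V_v \subseteq N^-(v)$ and $N(u) \cap V_v \subseteq N^-(v)$ for each $u \in N^-(v)$. Consequently, every arc or edge incident to $N^+(v) \cup N^-(v)$ that does not stay inside $N^+(v)$ or $N^-(v)$ respectively must leave $V_v$. Writing $A^\pm$ for the number of arcs internal to $N^\pm(v)$ and $e^\pm$ for the number of internal edges, the total number of adjacencies between $N^+(v) \cup N^-(v)$ and the exterior $V(G) \setminus V_v$ equals
\[ (100-A^+) + (100-A^-) + (30-2e^+) + (30-2e^-), \]
while each exterior vertex $w$ absorbs at most $|N^-(w)|+|N^+(w)|+|N(w)| = 23$ of them. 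Bounding $A^\pm$ and $e^\pm$ using the girth-$5$ constraints on the induced substructures — notably that the edge subgraph on $N^\pm(v)$ is $3$-regular of girth $\geq 5$ on $10$ vertices, hence a subgraph of the Petersen graph ($e^\pm \leq 15$), and that arcs within $N^\pm(v)$ cannot join edge-adjacent pairs (else multi-connections arise) — and double-counting, one forces at least $20$ exterior vertices, so $|V(G)| \geq 50$.

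The main obstacle will be making the final double count sharp: the crude internal bounds $A^\pm \leq 90$ and $e^\pm \leq 15$ do not on their own deliver $20$ exterior vertices, so the argument must combine them with finer girth-$5$ constraints on length-$2$ walks from $v$ — for example, that two distinct forward $2$-walks from $v$ with at least one of edge-edge type cannot share an endpoint, which limits how heavily exterior vertices can be ``reused'' as collision points — in order to tighten the count to exactly the $20$ exterior vertices that match the construction of order $50$.
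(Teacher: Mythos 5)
Your setup (the $30$-vertex ball around $v$ and the containments $N^{+}(u)\cap V_v\subseteq N^{+}(v)$, $N(u)\cap V_v\subseteq N^{+}(v)$ for $u\in N^{+}(v)$, and their duals) is correct and matches the paper's starting point, but the double count you build on it cannot reach $50$, and the step you defer to ``finer girth-$5$ constraints'' is in fact the entire difficulty. Quantitatively: the number of crossing adjacencies is at most $260-A^{+}-A^{-}-2e^{+}-2e^{-}\le 260$, so with your absorption bound of $23$ per exterior vertex you force only $\lceil 260/23\rceil=12$ exterior vertices, i.e.\ $|V(G)|\ge 42$. One can improve the absorption bound to $13$ (an exterior vertex cannot simultaneously touch the $N^{+}(v)$ side and the $N^{-}(v)$ side, since $v\to u\to t\to s\to v$ and its edge variants are $4$-cycles), but then $20$ exterior vertices follow only if $A^{+}=A^{-}=e^{+}=e^{-}=0$, and internal arcs and edges in $N^{\pm}(v)$ are \emph{not} excluded by girth $5$ ($v\to u_1\to u_2$ together with $v\to u_2$ closes no cycle). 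Your proposed collision lemma does not rescue the count: two forward $2$-walks from $v$ to the same endpoint only force a short cycle when one of them is reversible (edge--edge); two arc-containing walks both oriented from $v$ to $t$ close nothing, so a single exterior vertex really can absorb up to $10$ in-arcs from $N^{+}(v)$. Thus the averaging argument has no mechanism to control the internal adjacencies, which is where the $20$ missing vertices must come from.

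The paper closes exactly this gap by a pointwise (not averaged) statement: it proves the Claim that some $w\in N^{+}(u)$ satisfies $|\overrightarrow{N}(w)\cap N^{+}(u)|\le 3$, via a case analysis of the possible induced configurations on the $10$-vertex set $N^{+}(u)$ (first excluding two incident edges inside $N^{+}(u)$, then ruling out the remaining configurations of $\overrightarrow{N}(z)\cap N^{+}(u)$). Since $|\overrightarrow{N}(w)|=13$, at least $10$ of its forward neighbours leave $N^{+}(u)$, and the $4$-cycle exclusions show they avoid $N_2(u)\cup N^{-}(u)$; the symmetric vertex $w^{*}\in N^{-}(u)$ contributes $10$ more, disjoint from the first ten because a common vertex would give the $4$-cycle $u\to w\to t\to w^{*}\to u$. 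If you want to salvage your route, you would have to prove an upper bound on $A^{\pm}$ and $e^{\pm}$ of the same strength as that Claim, at which point you are doing the paper's case analysis anyway.
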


\begin{proof}
Let $G$ be a $[10,3;5]$-mixed cage.
By Proposition \ref{lowerbound1}, it follows that $|V(G)|\geq 30$.  Let $G'=G-A(G)$. Observe that $G'$ is a $(3,g')$-graph with $g'\ge 5$. Let $u\in V(G)$ and let $N_2(u)$ be the set of vertices of $G'$ at distance  at most 2 from $u$. Observe that $N^+(u)\cap N_2(u)=\emptyset$ and 
$N^-(u)\cap N_2(u)=\emptyset$.  

\textbf{Claim}. \emph{There exists a vertex $v\in N^+(u)$ such that $|\overrightarrow{N}(v)\cap N^+(u)|\le 3.$}

Suppose that for every $v\in N^+(u)$, $|\overrightarrow{N}(v)\cap N^+(u)|\ge 4$.  
Therefore, there exists a vertex $w\in N^+(u)$ such that  $|N(w)\cap N^+(u)|\le 2$. Otherwise the mixed graph  induced by $N^+(u)$ would contains a cycle of length at most 4. 
Let $y\in N^+(u)$ and suppose that $|N(y)\cap N^+(u)|= 2$. Let $x_1$, $x_2\in N(y)\cap N^+(u)$ and let $z\in N^+(y)\cap N^+(u)$. 
Since $|\overrightarrow{N}(z)\cap N^+(u)|\ge 4$ and $G$ has girth 5, it follows that   $x_1,x_2\notin \overrightarrow{N}(z)$. Let $Z=\overrightarrow{N}(z)\cap N^+(u)$ (see Figure \ref{estructura}).
\begin{figure}[h]
\centering
\includegraphics[width=0.5\linewidth]{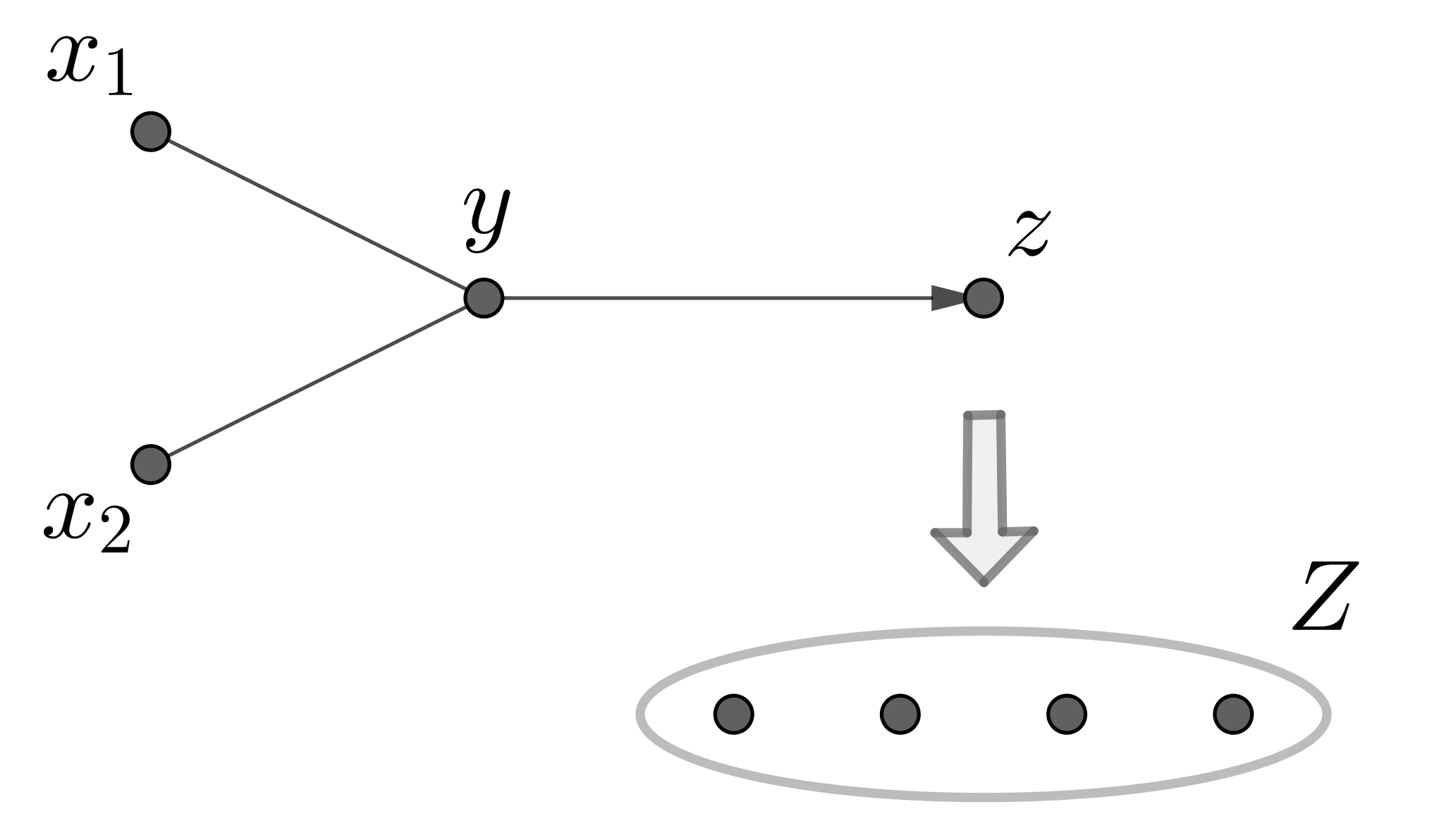}
\caption{Structure with two adjacent edges.}
\label{estructura}
\end{figure}
\vspace{2cm}

Notice that if we want maximize the number of edges and the arcs in $G[Z]$ (the subgraph induced graph by $Z$), there are only nine possibilities for $G[Z]$ (see Figure \ref{9configuraciones}). If there is a vertex $w\in Z$ such that $|\overrightarrow{N}(w)\cap Z|= 1$, then $w$ is incident with an edge  of $G[Z]$ (see Figure \ref{9configuraciones}). Hence, $w$ cannot be adjacent to $z$ with an edge, because the girth of $G$ is $5$. 
Therefore $|\overrightarrow{N}(w)\cap (N^+(u)\setminus Z)|\ge 3$ and since $x_1, x_2,y\notin \overrightarrow{N}(w)$, a contradiction is obtained.

\begin{figure}[h]
\centering
\includegraphics[width=1\linewidth]{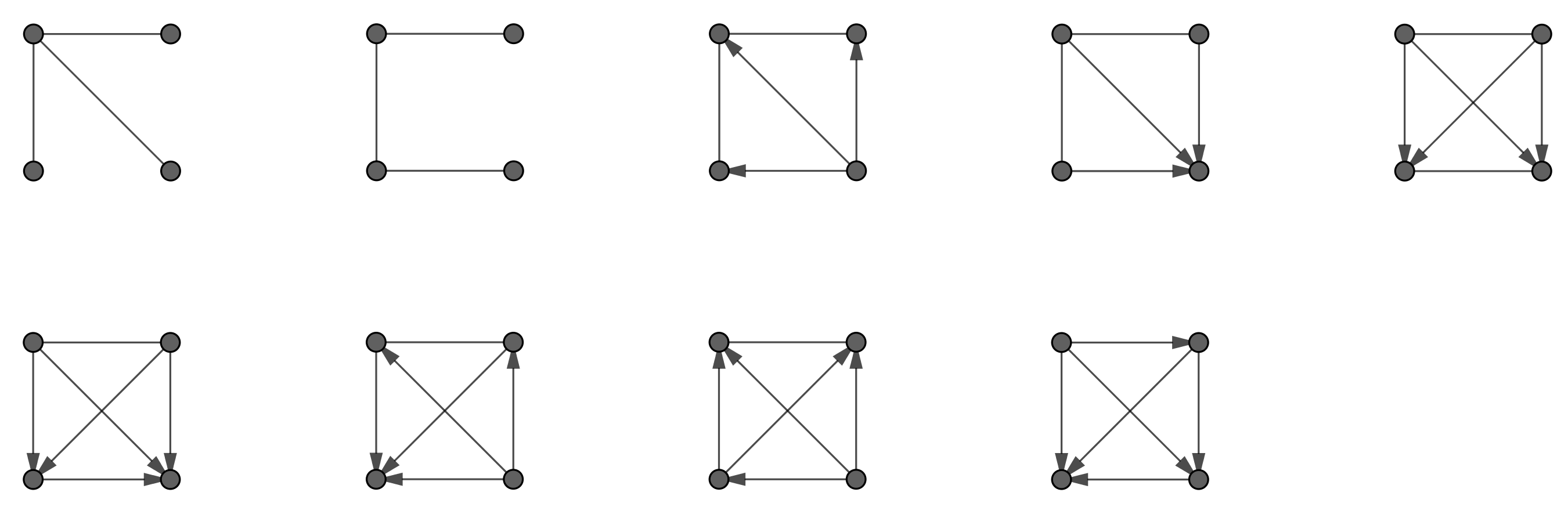}
\caption{The nine configurations of $G[Z]$.}
\label{9configuraciones}
\end{figure}

Similarly, in the possibilities of  $G[Z]$ that have a vertex $w$ with $|\overrightarrow{N}(w)\cap Z|=0$, it follows that 
$|\overrightarrow{N}(w)\cap (N^+(u)\setminus \{z\})|\ge 3$. Since $x_1, x_2,y\notin \overrightarrow{N}(w)$, a contradiction is obtained.
 Consequently, there are no two incident edges in $N^+(u)$.

Let $x\in N(y)\cap N^+(u)$ and $z\in N^+(y) \cap N^+(u)$ (see Figure \ref{estructura_2}).
\begin{figure}[h]
\centering
\includegraphics[width=0.6\linewidth]{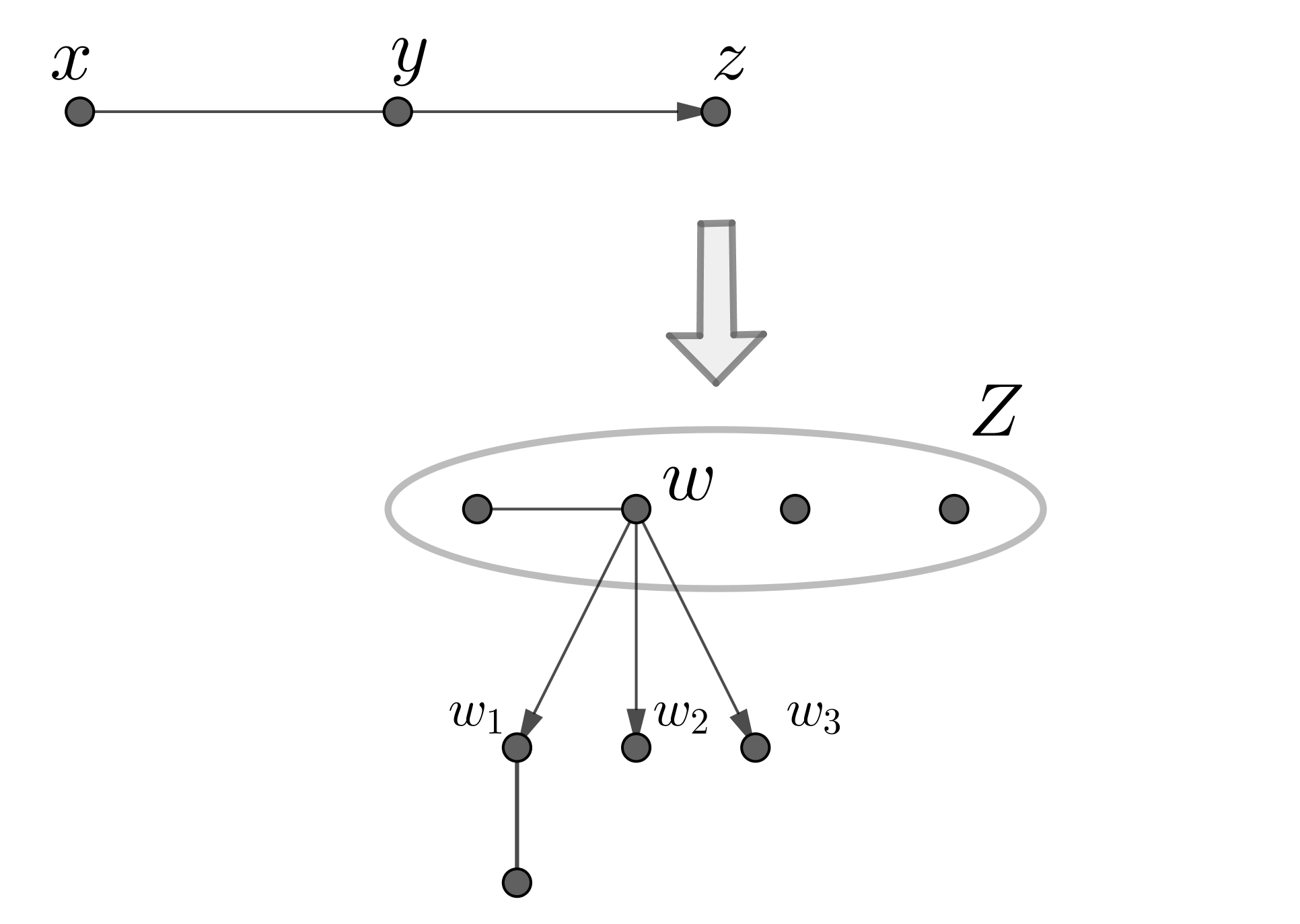}
\caption{Structure with at least 11 vertices in $N^+(u)$.}
\label{estructura_2}
\end{figure}
Observe that  $|\overrightarrow{N}(z)\cap N^+(u)|\ge 4$ and $x,y\notin (\overrightarrow{N}(z)\cap N^+(u))$. Let
 $Z=\overrightarrow{N}(z)\cap N^+(u)$. In this case we only have five possibilities for $G[Z]$ (see  Figure \ref{5configuraciones}).

\begin{figure}[h]
\centering
\includegraphics[width=1\linewidth]{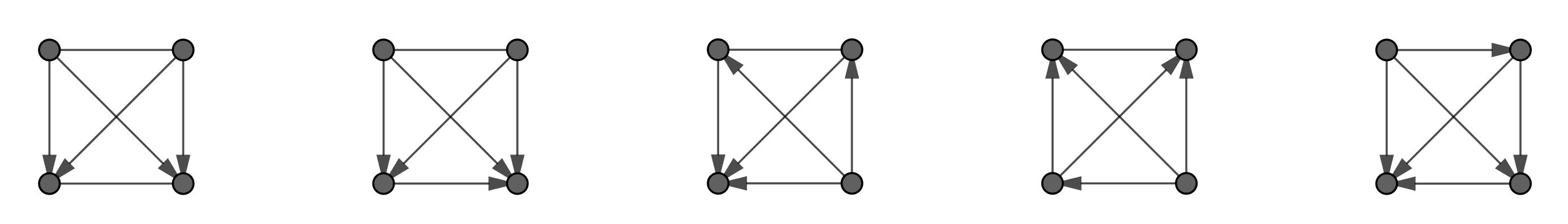}
\caption{The five configurations of four vertices with the maximum number of edges and arcs preserving the five girth and without two adjacent edges.}
\label{5configuraciones}
\end{figure}
Observe that in each one of the possible graphs of $G[Z]$ there is a vertex $w$ with either $|\overrightarrow{N}(w)\cap Z|=0$ or $|\overrightarrow{N}(w)\cap Z|=1$.
Let $w\in Z$ and suppose that $|\overrightarrow{N}(w)\cap Z|=0$. Since $|\overrightarrow{N}(w)\cap N^+(u)|\ge 4$, it follows that  $|N^+(u)|\geq 11$, a contradiction.
If $|\overrightarrow{N}(w)\cap Z|= 1$, then $|N(w)\cap Z|=1$. Moreover, since there are no two incident edges in $N^+(u)$, it follows that $w\in N^+(z)$ and  there are at least three vertices $w_1,w_2,w_3\in (N^+(w)\cap (N^+(u)\setminus\{x,y,z\}))$. Since $|\overrightarrow{N}(w_1)\cap N^+(u)|\ge 4$ and $|N^+(u)|=10$, it follows that $|\overrightarrow{N}(w_1)\cap (Z\cup \{y,z\})|\ge 1$. Therefore a cycle of length at most 4 is obtained. A contradiction. 

Therefore, there is a vertex $w\in N^+(u)$ such that $|\overrightarrow{N}(w)\cap N^+(u)|\le 3$.
By a similar reasoning, there is a vertex $w^*\in N^-(u)$  such that $|\overleftarrow{N}(w^*)\cap N^-(u)|\le 3$.

Hence 
$(\overrightarrow{N}(w)\setminus N^+(u))\cap (N^-(u)\cup N_{2}(u))=\emptyset$ and $(\overleftarrow{N}(w^*)\setminus N^-(u))\cap (N^+(u)\cup N_{2}(u))=\emptyset$. Since $|\overrightarrow{N}(w)|=|\overleftarrow{N}(w^*)|=13$, it follows that $|V(G)|\ge 50$, and the result follows.
\end{proof}

\subsection{Upper bounds}

\subsubsection{A family of $[z,r;5]$-mixed graph}

To construct this family of mixed graphs we use the incidence graph of a partial plane. A \textit{partial plane} is defined as two finite sets $\mathscr{P}$ and $\mathscr{L}$ called \emph{points} and \emph{lines}, respectively, where $\mathscr{L}$ consists of subsets of $\mathscr{P}$, such that any line is incident with at least two points, and two points are incident with at most one line. 
The \textit{incidence graph} of a partial plane is a bipartite graph with partite sets $\mathscr{P}$ and $\mathscr{L}$ where a point of $\mathscr{P}$ is adjacent to a line of $\mathscr{L}$ if they are incident. Observe that the incidence graph of a partial plane has even girth $g \geq 6$. In Remark \ref{remark1} we describe a biaffine plane. 

\begin{remark}\label{remark1}\cite{Brown}.

Let $\mathbb{F}_q$ be the finite field of order $q$.
\begin{enumerate}
\item[(i)] Let $\mathscr{L} =\mathbb{F}_q \times \mathbb{F}_q$ and $\mathscr{P} =\mathbb{F}_q \times \mathbb{F}_q$ denoting the elements of $\mathscr{L}$ and $\mathscr{P}$ using ``brackets'' and ``parenthesis'', respectively.
The following set of $q^2$ lines define a biaffine plane:

\begin{equation}
[m, b] = \{(x, mx + b) : x \in \mathbb{F}_q\} \  for \ all\ m, b \in \mathbb{F}_q.
\end{equation}

\item[(ii)] The incidence graph of the biaffine plane is a bipartite graph $B_q = (\mathscr{P},\mathscr{L})$ which is $q$-regular, has order $2q^2$, diameter 4 and girth 6, if $q \geq 3$; and girth 8, if $q=2$.

\item[(iii)] The vertices mutually at distance 4 are the vertices of the sets $L_m = \{[m, b] : b \in \mathbb{F}_q\}$, and $P_x = \{(x, y) : y \in \mathbb{F}_q\}$ for all $x,m \in \mathbb{F}_q$.
\end{enumerate}
\end{remark}

Next, we describe two operations that we perform on the graph $B_q$: reduction and amalgam.

The reduction operation refers to delete the last pairs of blocks $(P_i,L_i)$ from $B_q$. Let $\gamma\in \{1,\ldots, q-1\}$, define $B_q(\gamma)=B_q-\bigcup_{i=1}^{\gamma}(P_{q-i}\cup L_{q-i})$.

\begin{lemma}\cite{Articulo_5}
Let $\gamma\in \{1,\ldots, q-1\}$. Then, the graph $B_q(\gamma)$ is $(q-\gamma)$-regular of order $2(q^2 - q\gamma)$ and girth $g\geq 6$.
\end{lemma}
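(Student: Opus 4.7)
The plan is to verify the three assertions (degree, order, girth) separately, using the explicit algebraic description of the biaffine plane from Remark \ref{remark1}. The order and girth are essentially immediate, so most of the work lies in the regularity count.

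First I would handle the order. Since $P_x=\{(x,y):y\in \mathbb{F}_q\}$ and $L_m=\{[m,b]:b\in \mathbb{F}_q\}$, each of these ``blocks'' has exactly $q$ vertices. The construction removes $\gamma$ point-blocks and $\gamma$ line-blocks, so $2\gamma q$ vertices are deleted from the $2q^2$ vertices of $B_q$, leaving $2(q^2-q\gamma)$ vertices as claimed. For the girth, I would simply observe that $B_q(\gamma)$ is an induced subgraph of $B_q$, and since deleting vertices can only destroy (not create) cycles, $g(B_q(\gamma))\ge g(B_q)\ge 6$.

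The core of the proof is regularity. Here I would use the explicit incidence rule: a point $(x,y)$ lies on a line $[m,b]$ iff $b=y-mx$, so for every slope $m\in \mathbb{F}_q$ there is a unique line of slope $m$ through $(x,y)$. Therefore, for a surviving point $(x,y)$ with $x\in \mathbb{F}_q\setminus\{q-\gamma,\dots,q-1\}$, its neighbors in $B_q$ are exactly the lines $[m,y-mx]$, one for each $m\in\mathbb{F}_q$. The reduction deletes precisely those neighbors whose slope lies in $\{q-\gamma,\dots,q-1\}$, so exactly $\gamma$ neighbors are removed and the surviving degree is $q-\gamma$. By the symmetric argument, a surviving line $[m,b]$ contains the $q$ points $\{(x,mx+b):x\in\mathbb{F}_q\}$; deleting the point-blocks $P_{q-1},\dots,P_{q-\gamma}$ removes exactly $\gamma$ of these, so $[m,b]$ also has degree $q-\gamma$ in $B_q(\gamma)$.

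There is no real obstacle: the only thing one has to be careful about is that deleting $P_i$ really does cost each surviving line exactly one neighbor (and symmetrically for $L_m$), which is guaranteed by the fact that every line meets every point-block in a unique point (and dually every two line-blocks are ``parallel'', as encoded in part (iii) of Remark \ref{remark1}). This uniqueness makes the loss count exact and independent of which blocks are chosen, so the argument does not depend on the specific indices $q-1,\dots,q-\gamma$.
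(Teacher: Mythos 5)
Your proof is correct. The paper does not actually prove this lemma --- it is imported from \cite{Articulo_5} as a citation --- but your verification (order by counting the $2\gamma$ deleted blocks of size $q$, girth by noting that an induced subgraph cannot have smaller girth, and regularity via the fact that each point meets each line-block $L_m$ in the unique line $[m,y-mx]$ and dually) is exactly the standard argument and is complete.
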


Now, we describe the amalgam operation. Let $\Gamma_1$ and $\Gamma_2$ be two graphs of the same order and with the same labels on their vertices. The amalgam of $\Gamma_1$ into $\Gamma_2$ is a graph obtained adding all the edges of $\Gamma_1$ to $\Gamma_2$.

We will show how we apply the operations of reduction and amalgam to the graph $B_q$ to build a family of $[z; r; 5]$ -mixed graph.

Let $z\neq 2$ be a positive integer. Let $p$ be the smallest prime number such that $4z+1 \leq p \leq 5z$.
Consider $\mathbb{F}_p=\mathbb{Z}_p$ and let $B_p$ be the incidence graph of the biaffine plane with this field.
Let $\overrightarrow{C}_p (1,\ldots,z)$ the circulant digraph of order $p$. 
Recall that a circulant digraph over $\mathbb{Z}_p$, denoted by $\overrightarrow{C}_p (1,\ldots,z)$ is a digraph whose set of vertices are the elements of $\mathbb{Z}_p$ and the set of arcs in $\overrightarrow{C}_p (1,\ldots,z)$ are defined as $A(\overrightarrow{C}_p (1,\ldots,z))=\{\overrightarrow{ij}|(j-i)\in \{1,\ldots, z\}$ mod $p \}$.
Let $\{1,\ldots, z\}$ be the  \emph{weight} or the  \emph{Cayley  color} of the arcs on $\overrightarrow{C}_p (1,\ldots,z)$.

We define $B^*_p(\gamma)$ to be the amalgam of $\overrightarrow{C}_p (1,\ldots,z)$ into $P_i$ and $L_i$ for $i\in \{0, \ldots, p-\gamma-1\}$ and $\gamma\in \{1, \ldots, p-2\}$. 
To simplify notation, we assume that the labelling of $\overrightarrow{C}_p (1,\ldots,z)$ corresponds to the second coordinate of $P_i$ and $L_i$ for $i\in \{1, \ldots, p-\gamma-1\}$.

To prove Theorem \ref{construction1}, we use a result due to Dusart \cite{Dusart}. For any integer $z\geq 3275$, there always exists a prime number between $n$ and $(1+1/(2ln^2n))n$. Since $(1+1/(2ln^2(4z+1)))(4z+1)<5z$, we ensure that for any integer $z\geq 3275$, there always exists a prime number between $4z+1$ and $5z$. And, it is not difficult to obtain, using simple computer calculations, the same result for $1\leq z \leq 3275$ and $z\neq 2$. 

\begin{theorem}\label{construction1}
Let $p$ be the smallest prime number such that $4z+1 \leq p \leq 5z$, for every positive integer $z\neq 2$. Then $n[z,r;5] \leq 2pr$, for $r \in \{1,\ldots,p\}$.
\end{theorem}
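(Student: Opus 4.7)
The plan is to explicitly construct a $[z,r;5]$-mixed graph on $2pr$ vertices by specializing the framework developed above: set $\gamma=p-r$ and take $G=B_p^*(\gamma)$. Since the reduction removes $\gamma$ blocks of points and $\gamma$ blocks of lines from $B_p$, the vertex count is $2(p^2-p\gamma)=2pr$, and amalgamation of the circulant digraph $\overrightarrow{C}_p(1,\ldots,z)$ into every surviving block $P_i,L_i$ does not add vertices, so $|V(G)|=2pr$. First I would verify regularity: the cited lemma gives that $B_p(\gamma)$ is $r$-regular, and amalgamation introduces only arcs, so the undirected degree stays at $r$; within each block of $p$ vertices, the circulant digraph contributes exactly $z$ in-arcs and $z$ out-arcs per vertex, while no arcs cross between blocks, so the directed degree is $z$ as required.

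The heart of the argument is the girth analysis, which I would organize around a parity observation. Every edge of $G$ lies in $B_p(\gamma)$, hence runs between the point side and the line side; every arc, by construction, stays inside a single block $P_i$ or $L_i$, hence preserves the side. Consequently any cycle must use an even number of edges. This forces a cycle of length $\ell\le 4$ to have $\ell-e$ arcs with $e\in\{0,2,4\}$.

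For girth at least $5$ I would rule out each small case using the hypothesis $4z+1\le p$. A cycle of length $3$ must be a directed triangle inside a single block, corresponding to weights $a_1,a_2,a_3\in\{1,\ldots,z\}$ with $a_1+a_2+a_3\equiv 0\pmod p$; since $a_1+a_2+a_3\le 3z<p$, this is impossible. For cycles of length $4$: the case of four arcs similarly requires $a_1+\cdots+a_4\equiv 0\pmod p$ with sum at most $4z<p$; the case of four edges would be a $4$-cycle in $B_p(\gamma)\subseteq B_p$, contradicting $g(B_p)\ge 6$; and the two arc-edge patterns (two consecutive arcs flanked by two edges, or arcs and edges alternating) both reduce, after writing out the point/line coordinates via the incidences $y=mx+b$, to a congruence $a_1+a_2\equiv 0\pmod p$ with $a_1,a_2\in\{1,\ldots,z\}$, again impossible since $2z<p$. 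For the upper bound on girth, I would exhibit a $5$-cycle inside any single point block: choose weights $a_1,\ldots,a_5\in\{1,\ldots,z\}$ with $\sum a_i=p$, which is possible because $5\le p\le 5z$, and follow the arcs $(x,y)\to(x,y+a_1)\to\cdots\to(x,y+p)=(x,y)$.

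The main obstacle is the bookkeeping for the length-$4$ mixed cases: one has to list the combinatorially distinct orientations of two arcs and two edges along a $4$-cycle, respect the directional consistency (all arcs traversed the same way), and translate each into an equation in $\mathbb{F}_p$ via the biaffine incidence $y=mx+b$ in order to reduce everything to a small-sum congruence that $4z+1\le p$ rules out. Once that table is checked, the regularity is immediate and the $5$-cycle is transparent, so the inequality $n[z,r;5]\le 2pr$ follows.
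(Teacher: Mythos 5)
Your proposal is correct and follows essentially the same route as the paper: the same amalgam $B^*_p(\gamma)$ with $\gamma=p-r$, the same count $2pr$ and regularity check, and the same elimination of short cycles via the biaffine incidence $y=mx+b$ reducing to congruences $a_1+\cdots+a_k\equiv 0\pmod p$ with $k\le 4$ summands from $\{1,\ldots,z\}$, which $4z<p$ forbids (the paper phrases the alternating arc--edge--arc--edge case as the arc between the two line-vertices pointing the wrong way, which is the same congruence $s+t\equiv 0$). The only cosmetic omission is the length-$3$ case with two edges and one arc, which your own parity count permits but which dies instantly by the same mechanism (a line of $L_m$ meets each block $P_i$ in exactly one point, i.e., the congruence degenerates to $a_1\equiv 0$).
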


\begin{proof}
Let $p$ be the smallest prime number such that $4z+1 \leq p \leq 5z$, for every positive integer $z$ other than 2.
Let $\overrightarrow{C}_p (1,\ldots,z)$ be the circulant digraph. Observe that $\overrightarrow{C}_p (1,\ldots,z)$ has girth  $5$.
Let $B^*_p(\gamma)$  be the amalgam of $\overrightarrow{C}_p (1,\ldots,z)$ into $P_i$ and $L_i$ for $i\in \{0, \ldots, p-\gamma-1\}$ and $\gamma\in \{1, \ldots, p-2\}$. 
Let $r=p-\gamma$.
Notice that $|B^*_p(\gamma)|=2p(p-\gamma)=2pr$, also each vertex $v \in V(B^*_p(\gamma))$ is $r$-regular in edges and $z$-regular in arcs.
Let $C$ be a shortest cycle in $B^*_p(\gamma)$. Suppose by contradiction, that $|V(C)|\leq 4$. Therefore, $C=(w,x,y,w)$ or $C=(v,w,x,y,v)$. Notice that $C$ cannot be completely contained in $\overrightarrow{C}_p (1,\ldots,z)$ or in $B_p(\gamma)$. With out loose of generality suppose that 
$w,x \in P_i$ and $y\in L_m$ for some $i,m \in \{0,1,\ldots,r \}$, that is, $w=(i,a)$, $x=(i,b)$ and $y=[m,k]$. Since $\overrightarrow{wx} \in A(B^*_p(\gamma)[P_i])$, then $b=a+s$, for some $s \in \{1,\ldots, z\}$.
Since the edges between $P_i$ and $L_m$ induces a matching, then $wy \notin E(B_p(\gamma))$, and hence $wy \notin E(B^*_p(\gamma))$. Thus $|V(C)|>3$, and we can assume $|V(C)|=4$ and $C = (v,w,x,y,v)$.
By the same argument, $v \notin P_i$.
Since there are no edges between $P_i$ and $P_j$ in $B^*_p(\gamma)$, for $j \in \{0,1,\ldots,r \} \setminus  \{i\}$, neither between $L_m$ and $L_n$ in $B^*_p(\gamma)$, for $n \in \{0,1,\ldots,r \}\setminus\{m\}$. Thus $v=[m,l] \in L_m$ and $\overrightarrow{yv} \in A(B^*_p(\gamma)[L_m])$, then $l=k+t$, for some $t \in \{1,\ldots, z\}$. 
If $xy\in E(B_p)$, then $x=(i,b)=(i, a+s)$ and $y=[m,k]$.  Hence,
$a+s=mi+k$, implying  $y=[m,a+s-mi]$. Since $vw \in E(B_p)$, $v=[m,l]$ and $w=(i,a)$ it follows that  $a=mi+l$, that is, $v=[m,a-mi]$. By definition of $\overrightarrow{C}_q (1,\ldots,z)$, we have that  $\overrightarrow{vy} \in A(\overrightarrow{C}_p (1,\ldots,z))$ instead of $\overrightarrow{yv}$, a contradiction.
Hence $g(B^*_p(\gamma))=5$ and $B^*_p(\gamma)$ is a $[z,r;5]$-mixed graph of order $2pr$. Therefore, $n[z,r;5] \leq 2pr$.

If $r=p$, then simply amalgam $\overrightarrow{C}_p (1,\ldots,z)$ in each $P_i$ and $L_i$ of $B_p$, and by a similar analysis  we verify that $g(B^*_p)=5$, it follows that $n[z,r;5] \leq 2pr$.
\end{proof}

In  Figure \ref{gmbq} is depicted an example of a $[3,13;5]$-mixed graph that is an amalgam of the circulant digraph $\overrightarrow{C}_{13} (1,2,3)$ in $B_{13}$.

\begin{figure}[h]
\centering
\includegraphics[scale=.4]{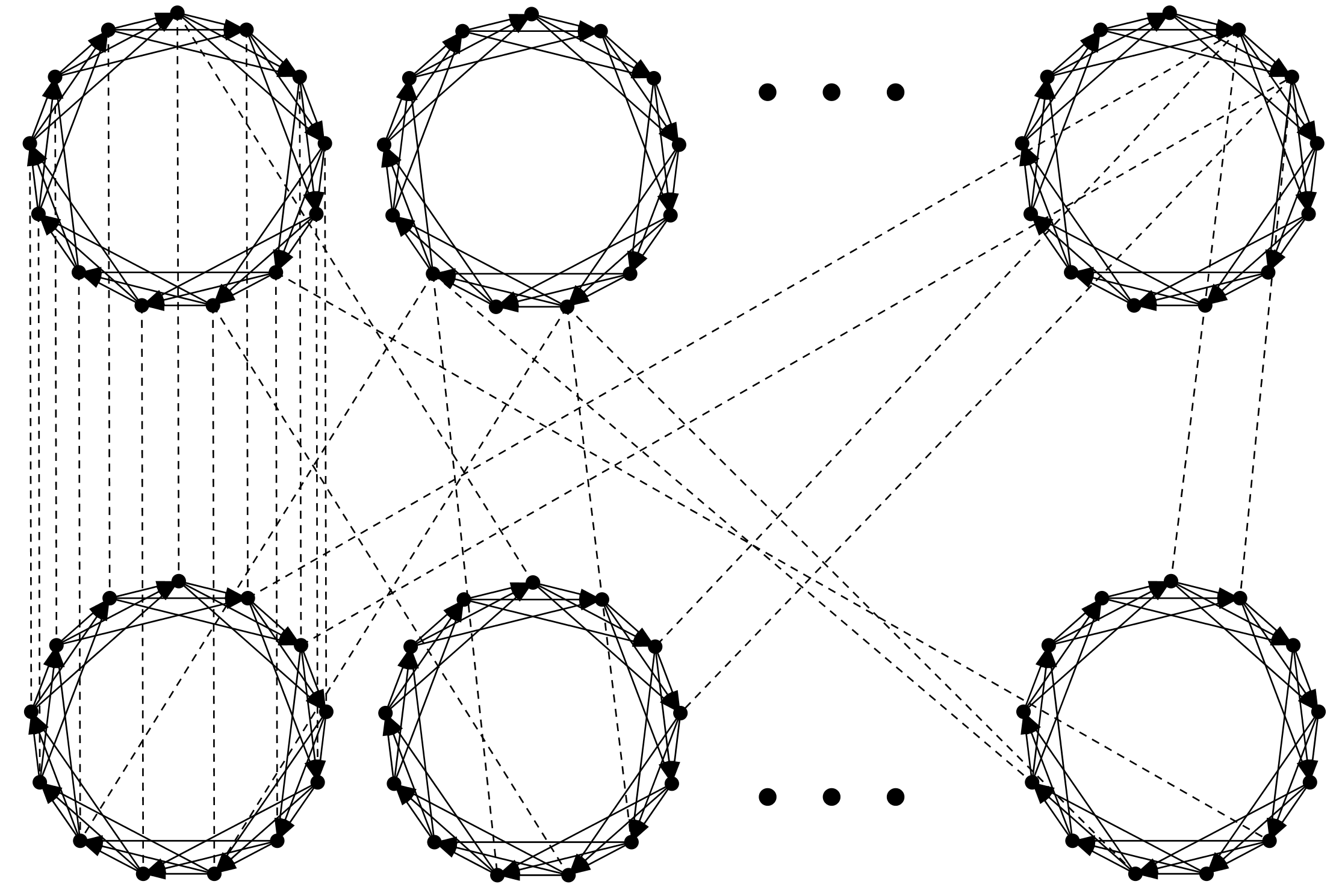}
\caption{A $[3,13;5]$-mixed graph.}
\label{gmbq}
\end{figure}

\subsubsection{Other bounds for different girth}\label{1035}

In this subsection we present an upper bound for $n[z,r;g]$.

\begin{theorem}\label{upper_bound}
Let $r\ge 2$ and $g\ge 3$ be  integers. Then $n[z',r;g]\leq gn_0(r,g)$, for $1\le z'\le n_0(r,g)$.
\end{theorem}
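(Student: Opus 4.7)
The plan is to prove the bound constructively by stacking $g$ copies of a standard $(r,g)$-cage into ``layers'' and connecting consecutive layers by a circulant pattern of arcs, so that every arc points from layer $k$ to layer $k+1 \pmod g$. The modulo-$g$ layer index will force every cycle that uses at least one arc to have length at least $g$, while cycles that use no arcs sit inside one copy of the cage and therefore also have length at least $g$.

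More precisely, let $H$ be an $(r,g)$-cage, so $|V(H)|=n_0(r,g)=:n$, and fix a labelling $V(H)=\{u_0,u_1,\dots,u_{n-1}\}$. Define the mixed graph $G$ on vertex set $V(H)\times \mathbb{Z}_g$; write $L_k=V(H)\times\{k\}$ for the $k$-th layer. The edge set of $G$ is obtained by placing a copy of $H$ in each layer, namely $(u_i,k)(u_j,k)\in E(G)$ iff $u_iu_j\in E(H)$. This gives undirected degree exactly $r$ at every vertex. For the arcs, I would use a circulant rule between consecutive layers: from $(u_j,k)$ draw arcs to $(u_{j+1},k+1),(u_{j+2},k+1),\dots,(u_{j+z'},k+1)$, where the first coordinate is taken modulo $n$ and the second modulo $g$. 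The hypothesis $1\le z'\le n_0(r,g)=n$ guarantees that these $z'$ targets are pairwise distinct, so every vertex has out-degree and in-degree exactly $z'$.

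It remains to verify that $g(G)=g$. Since the layer $L_0$ already contains a cycle of length $g$ (being a copy of the cage $H$), we have $g(G)\le g$. For the reverse inequality, consider any cycle $C$ of $G$. By the paper's convention, the arcs along $C$ are traversed consistently; since every arc of $G$ increases the layer index by $1\pmod g$, if $C$ uses $t$ arcs, then the total layer shift around $C$ is $t\pmod g$, which must equal $0$. Hence either $t=0$, in which case $C$ lies inside a single layer $L_k$ and therefore $\ell(C)\ge g$ because $H$ has girth $g$; or $t\ge g$, in which case $\ell(C)\ge t\ge g$. Combining, $g(G)\ge g$, so $g(G)=g$. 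Counting vertices gives $|V(G)|=g\cdot n_0(r,g)$, and therefore $n[z',r;g]\le g\,n_0(r,g)$.

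The delicate point, and the main place where care is needed, is the cycle analysis: one has to be certain that the paper's definition of a walk (arcs traversed in their own direction) forces the ``layer advances by $t$'' conclusion for any cycle, and that no mix of edges within a layer with arcs between layers can accidentally create a short cycle. The modular layer argument handles this uniformly; the only other thing to double-check is that the circulant arc pattern is loop-free and multi-arc-free, which is immediate from $1\le z'\le n$ together with the fact that arcs connect different layers.
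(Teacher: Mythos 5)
Your construction is essentially the paper's own: both stack $g$ copies of an $(r,g)$-cage in a cyclic arrangement of layers, insert $z'$ pairwise disjoint oriented perfect matchings from each layer to the next (your circulant rule is just an explicit choice of these $1$-factors), and conclude that any cycle either stays in one layer (length $\ge g$ by the girth of the cage) or uses a number of arcs divisible by $g$ and hence at least $g$. The argument is correct, and your modular layer-shift bookkeeping is in fact a cleaner phrasing of the girth verification than the paper's.
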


\begin{proof}
We present a construction of a $[z',r;g]$-mixed graph of order $gn_0(r,g)$, for every $z'\in \{1,2,\dots ,n_0\}$.
Let $H$ be an $(r,g)$-cage. Suppose that $V(H)=\{1,2, \ldots ,n_0(r,g)\}$. 
Let $H_i$ be a copy of $H$, for $i \in \{0,1,\dots, g-1\}$, with $V(H_i)=\{1_i,2_i, \ldots ,n_{0,i}\}$. 
Let $G=\bigcup _{i=0}^{g-1}H_i$. Notice that $G$ is a disconnected $r$-regular graph with girth $g$ and order $gn_0(r,g)$.
Let $B(H_i,H_{i+1})$ be the complete bipartite directed graph that is obtained by adding all the arcs from $V(H_i)$ to $V(H_{i+1})$, for $i\in \{0,\dots,  g-1\}$ (mod $g$).
Let ${\cal{A}}_{i}=\{A_1,A_2,\dots , A_{n_0(r,g)}\}$ be a $1$-factor (oriented) of $B(H_i,H_{i+1})$.
Let $\overrightarrow{{\cal{A}}_i}(j)=\bigcup_{i=1}^j {\cal{A}}_i$ and
let $G^*=G+\bigcup_{i=0}^{g-1}\overrightarrow{{\cal{A}}_i}(z')$. Observe that we can always get that ${\cal{A}}_i \cap {\cal{A}}_j=\emptyset$, it follows that  $G^*$ is a  $[z',r;g]$-mixed graph.

In the following we will prove that $G^*$ has girth $g$. Suppose that  $C$ is a cycle such that $|V(C)|<g$, hence it cannot contain edges only. Neither can it consist only of arrows, by
the cycles formed of arcs have length a multiple of $g$. Therefore $C$ consists of edges and arcs, that is, if it contains vertices of the copy $i$, then it contains at least one vertex of the copy $i+1$, and according to the direction of the arcs, the minimum distance of the vertices of the copy $i+1$ to any of the copy $i$ is $g-1$, which is a contradiction. Therefore $ G^*$ has a girth $g$ and is a $[z',r;g]$-mixed graph with $gn_0$ vertices, that is, $n[z',r;g]\leq gn_0(r,g)$.
\end{proof}

\begin{corollary}
There exists a $[10,3;5]$-mixed cage of order 50.
\end{corollary}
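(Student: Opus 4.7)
The plan is to combine the two bounds already established in the paper. Specifically, I would invoke Theorem \ref{cota_cuello_5} for the lower bound and Theorem \ref{upper_bound} for the upper bound, with the parameters $z=10$, $r=3$, $g=5$.

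First, I recall that the Moore bound for cubic graphs of girth $5$ is $n_0(3,5)=10$, attained by the Petersen graph, which is in fact a $(3,5)$-cage. Thus $n_0(3,5)=10$, and since $z=10 \leq n_0(3,5)=10$, the hypotheses of Theorem \ref{upper_bound} are satisfied. Applying that theorem directly gives
\[
n[10,3;5] \leq g\,n_0(r,g) = 5 \cdot 10 = 50.
\]
Concretely, the construction realizing this bound is obtained by taking five disjoint copies $H_0,\ldots,H_4$ of the Petersen graph and, between consecutive copies $H_i$ and $H_{i+1}$ (indices mod $5$), inserting the union of $10$ disjoint oriented $1$-factors of the complete bipartite directed graph $B(H_i,H_{i+1})$, so that each vertex has in-degree $10$ and out-degree $10$ by arcs, while preserving the cubic undirected structure within each copy.

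Second, for the matching lower bound, Theorem \ref{cota_cuello_5} states that every $[10,3;5]$-mixed cage has order at least $50$.

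Combining the two inequalities yields $n[10,3;5]=50$, and the construction exhibited above (five copies of the Petersen graph joined by $10$ arc-factors between consecutive layers) is a $[10,3;5]$-mixed graph of that order, hence a $[10,3;5]$-mixed cage. The only step that requires any care is verifying that the parameter check $z = n_0(r,g) = 10$ is within the range allowed by Theorem \ref{upper_bound}, which is immediate since the range is $1 \le z' \le n_0(r,g)$; no further obstacle appears, as the hard analytical work has already been carried out in the two theorems being cited.
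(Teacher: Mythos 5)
Your proposal is correct and follows essentially the same route as the paper: the lower bound $n[10,3;5]\ge 50$ from Theorem \ref{cota_cuello_5} combined with the upper bound $n[10,3;5]\le 5\,n_0(3,5)=50$ from Theorem \ref{upper_bound} (with $z'=10\le n_0(3,5)=10$ in range). The explicit construction you describe --- five copies of the Petersen graph linked by ten oriented $1$-factors between consecutive layers --- is exactly the realization underlying Theorem \ref{upper_bound}, which the paper exhibits in Figure \ref{gm50}.
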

\begin{proof}
Let $G$ be a $[10,3;5]$-mixed cage. By Theorem \ref{cota_cuello_5} and Theorem \ref{upper_bound}, it follows that  $|V(G)|=50$. 
The mixed graph depicted in Figure \ref{gm50} is a $[10,3;5]$-mixed cage.
\end{proof}
\begin{figure}[h]
\centering
\includegraphics[scale=.20]{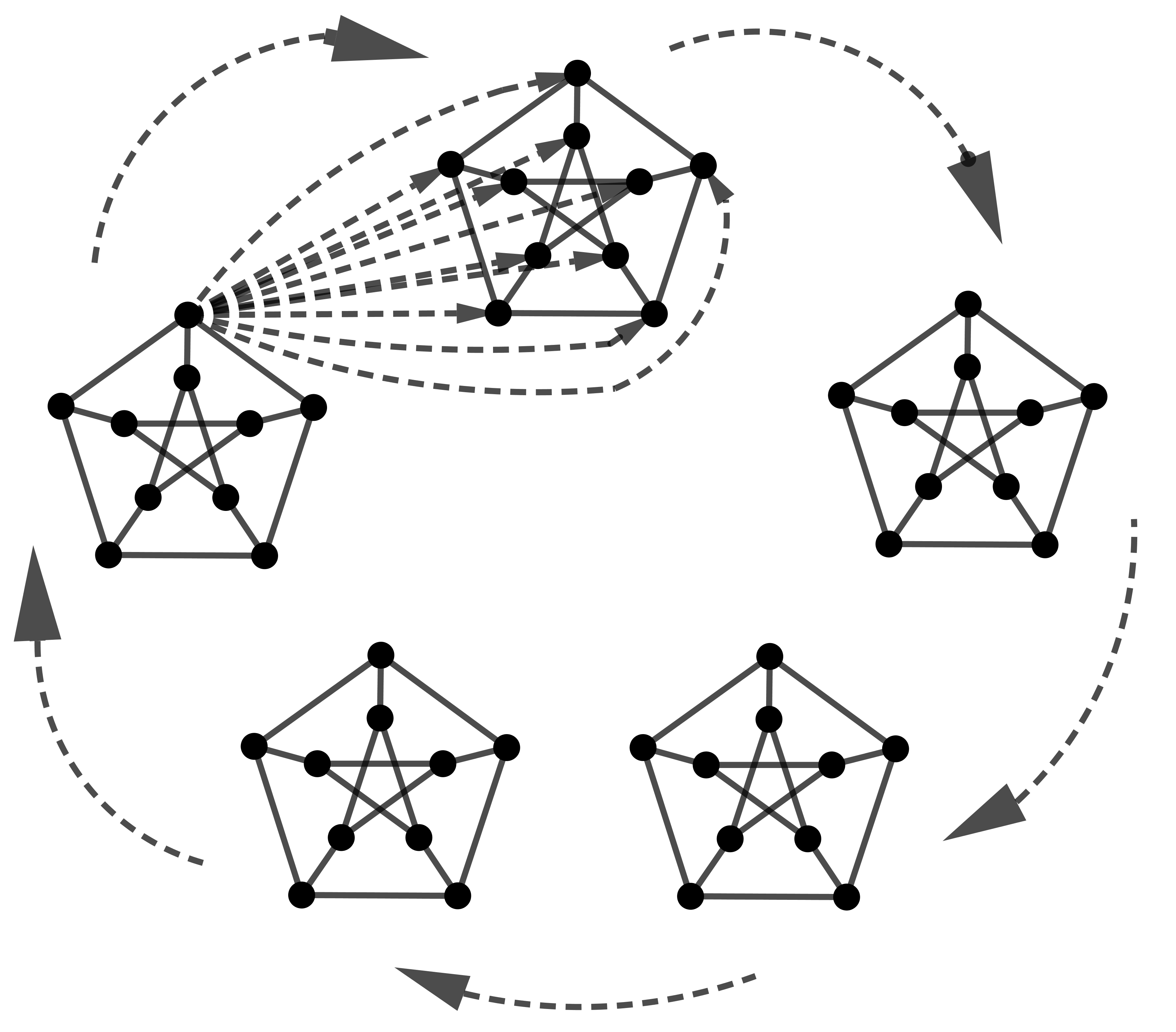}
\caption{A $[10,3;5]$-mixed cage of order 50.}
\label{gm50}
\end{figure}

\section{Future work}

The problem of find a mixed cage and study their properties is very recently.
As a suggestion to continue with the topic we propose two problems:

\begin{enumerate}

\item The study of the monotonocity for $[z,r;g]$-mixed cages with $z\geq 3$.

\item Find better lower upper bounds for $n[z,r;g]$, specially for $g=5$ and also find new constructions of $[z,r;g]$-mixed graphs with few vertices for any $g\geq 5$. A natural suggestion should be study the case for $g=6$.

\end{enumerate}

%---------------------------- Bibliography -------------------------------

\end{document}